\newtheorem{thm}{Theorem}[section]
\newtheorem{prop}[thm]{Proposition}
\newtheorem{lem}[thm]{Lemma}
\newtheorem{cor}[thm]{Corollary}
\newtheorem{conj}[thm]{Conjecture}
\newtheorem*{thmsurface}{Theorem~\ref{thm:VirtualSurfaceGroup}}
\newtheorem*{thmextend}{Theorem~\ref{thm:RigidExtends}}
\newtheorem*{corpair}{Corollary~\ref{cor:NoParabolicExact}} 
\theoremstyle{definition}
\newtheorem{defn}[thm]{Definition}
\newtheorem{rem}[thm]{Remark}
\renewcommand{\bar}[1]{\overline{#1}}
\newcommand{\boundary}{\partial}
\renewcommand{\emptyset}{\varnothing}
\newcommand{\field}[1]{\mathbb{#1}}
\newcommand{\Z}{\field{Z}}
\newcommand{\R}{\field{R}}
\newcommand{\E}{\field{E}}
\newcommand{\Hyp}{\field{H}}
\renewcommand{\hat}{\widehat}
\DeclareMathOperator{\CAT}{CAT}
\DeclareMathOperator{\Stab}{Stab}
\DeclareMathOperator{\diam}{diam}
\newcommand{\Drutu}{Dru{\cb{t}}u}
\newcommand{\Haissinsky}{Ha\"{i}ssin\-sky}
\newcommand{\Sierpinski}{Sier\-pi{\'n}\-ski}
\newcommand{\showcomments}{yes}
\newsavebox{\commentbox}
\begin{document}

\title[Planar boundaries and parabolic subgroups]{Planar boundaries and parabolic subgroups}

\author[G.C.~Hruska]{G.~Christopher Hruska}
\address{Department of Mathematical Sciences\\
University of Wisconsin--Milwaukee\\
PO Box 413\\
Milwaukee, WI 53211\\
USA}
\email{chruska@uwm.edu}

\author[G.S.~Walsh]{Genevieve S.~Walsh }
\address{Department of Mathematics\\
Tufts University\\
Medford, MA 02155\\
USA}
\email{genevieve.walsh@tufts.edu}

\begin{abstract}
We study the Bowditch boundaries of relatively hyperbolic group pairs, focusing on the case where there are no cut points.
We show that if $(G,\mathcal{P})$ is a rigid relatively hyperbolic group pair whose boundary embeds in $S^2$, then the action on the boundary extends to a convergence group action on $S^2$.
More generally, if the boundary is connected and planar with no cut points, 
we show that every element of $\mathcal{P}$ is virtually a surface group.
This conclusion is consistent with the conjecture that such a group $G$ is virtually Kleinian. We give numerous examples to show the necessity of our assumptions. 
\end{abstract}

\keywords{}

\subjclass[2020]{%
20F67 
20E08} 

\date{\today}

\maketitle

\setcounter{tocdepth}{2}
\tableofcontents

\section{Introduction}
\label{sec:Introduction}

Relatively hyperbolic groups generalize the notion of geometrically finite Kleinian groups acting on real hyperbolic space $\Hyp^3$ to groups acting similarly on other $\delta$--hyperbolic spaces \cite{BowditchRelHyp}.
Convergence groups were introduced by Gehring--Martin \cite{GehringMartin87} for actions on $S^2 = \boundary \Hyp^3$
and were related to boundaries of $\delta$--hyperbolic spaces by Tukia and Freden \cite{Tukia94,Freden95}.
In this article, we study geometrically finite convergence groups, 
which are precisely the boundary actions associated to relatively hyperbolic group pairs by \cite{Yaman04,GerasimovPotyagailo15_NonFG}.
We are motivated by the following general question: What conditions on a relatively hyperbolic pair $(G,\mathcal{P})$ with planar Bowditch boundary are sufficient to ensure that $G$ is a Kleinian group?
Notice that if a group has a planar boundary, the action of the group on its boundary might not extend to an action on $S^2$.

It is easy to construct examples of relatively hyperbolic groups with planar boundary that are not virtually fundamental groups of $3$--manifolds. 
We discuss one such construction, in Proposition~\ref{prop:HidingStuff}, which is general enough that the peripheral subgroups can be any arbitrary non-torsion group.
A second, cautionary example in Proposition~\ref{prop:ThreeSlopes} has peripheral subgroups equal to $\mathbb{Z}\oplus\mathbb{Z}$, yet is still not a virtual $3$--manifold group.
These two constructions produce groups whose Bowditch boundaries have cut points.  For these groups, the action on the Bowditch boundary does not extend to an action on $S^2$.

However, we prove that, for rigid group pairs,  the action on the Bowditch boundary does extend to a geometrically finite convergence group action on $S^2$. 
A relatively hyperbolic group pair $(G, \mathcal{P})$ is {\it rigid} if $G$ has no elementary splittings relative to $\mathcal{P}$. See Definition~\ref{def:Rigid} for more explanation. 

\begin{thmextend}
Suppose $G$ is one ended and $(G, \mathcal{P})$ is relatively hyperbolic. If $(G, \mathcal{P})$ is rigid 
and $M=\boundary(G,\mathcal{P})$ topologically embeds in $S^2$, then the action of $G$ on $M$ extends to an action on $S^2$ by homeomorphisms.
\end{thmextend}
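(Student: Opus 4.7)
The plan is to analyze $S^2 \smallsetminus M$ one complementary region at a time, extend the $G$-action equivariantly on each closure, and then verify continuity of the assembled self-map of $S^2$.

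First I would argue that rigidity of $(G,\mathcal{P})$ together with one-endedness of $G$ implies, via Bowditch's cut-point theorem, that $M$ has no global cut points. Since $M$ is a locally connected continuum in $S^2$ without cut points, a classical result in plane topology (going back to Moore and Whyburn) says every component $U$ of $S^2\smallsetminus M$ is a Jordan domain, i.e., an open disk whose frontier $\boundary U\subset M$ is a simple closed curve. Write $\mathcal{U}=\{U_i\}_{i\in I}$ for this collection. Since $\mathcal{U}$ is intrinsically determined by the embedding of $M$ in $S^2$, the $G$-action on $M$ permutes $\mathcal{U}$.

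Next I would extend the action one $G$-orbit at a time. Choose a set of orbit representatives $\{U_\alpha\}$ with stabilizers $H_\alpha=\Stab_G(U_\alpha)$. Because $\boundary U_\alpha$ is closed in $M$ and $H_\alpha$-invariant, the restriction of $G\curvearrowright M$ descends to a convergence action of $H_\alpha$ on $\boundary U_\alpha\cong S^1$. By the circle convergence theorem of Tukia, Gabai, and Casson--Jungreis, any such action on $S^1$ is topologically conjugate to a Fuchsian action on $\boundary\Hyp^2$, and therefore admits a canonical Poincar\'e extension to a homeomorphism action of $H_\alpha$ on the closed disk $\bar{\Hyp^2}\cong\bar{U_\alpha}$. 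Using a coset transversal of $H_\alpha$ in $G$, I would transport this extension across the orbit to define $\tilde g\colon \bar{U_i}\to\bar{gU_i}$ for every $i\in I$ and $g\in G$; uniqueness of the Poincar\'e extension then implies $g\mapsto\tilde g$ respects composition.

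The main obstacle I anticipate is continuity of $\tilde g$ at the points of $M\subset S^2$. Continuity is automatic on each $\bar{U_i}$ and on $M$ individually, so the delicate point is approaching $M$ through a sequence of distinct complementary domains. The key fact is that $M$ being a Peano continuum in the compact space $S^2$ forces the diameters of the domains in $\mathcal{U}$ to tend to zero: only finitely many can have diameter above any given threshold. Combined with continuity of $g\colon M\to M$, this shrinking property guarantees that $\tilde g$ is continuous everywhere; being a continuous bijection of a compact Hausdorff space, it is then a homeomorphism, yielding the claimed extension of the $G$-action from $M$ to $S^2$.
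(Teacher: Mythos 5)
There is a genuine gap, and it occurs at the sentence ``Since $\mathcal{U}$ is intrinsically determined by the embedding of $M$ in $S^2$, the $G$-action on $M$ permutes $\mathcal{U}$.'' The group acts only on $M$; it does not act on $S^2$ (that is what you are trying to prove), so a homeomorphism $g$ of $M$ has no a priori reason to send the boundary circle $\partial U$ of a complementary region to the boundary circle of another complementary region. To get this you need an \emph{intrinsic} characterization of the peripheral circles inside $M$, and that is where the real work lies: the paper first shows (Theorem~\ref{thm:nocutpairs}, via Proposition~\ref{prop:inseparable}, Corollary~\ref{cor:NoParabolicExact} and the Papasoglu--Swenson necklace analysis) that rigidity plus one-endedness forces $M$ to have \emph{no cut pairs at all}, not merely no cut points; then Lemma~\ref{lem:NoCutPair} shows distinct complementary Jordan curves meet in at most one point, and the Baire-category/retraction argument in Proposition~\ref{prop:Planar} shows that the complementary circles are exactly the embedded circles of $M$ that do \emph{not} separate $M$. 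Only after that characterization does a homeomorphism of $M$ permute the complementary regions, and only then can one run the Fuchsian extension and diameter-null continuity argument that you describe (which at that stage does match the paper). Your outline uses rigidity only to rule out cut points, and that cannot suffice: as the paper notes, there are relatively hyperbolic groups whose planar boundaries are Peano continua without cut points for which the boundary action does \emph{not} extend to $S^2$ \cite{KapovichKleiner00,HruskaStarkTran_DontAct}, so an argument relying solely on ``no cut points'' would prove a false statement. If $M$ has a cut pair, two complementary disc closures can share two boundary points and the image of a peripheral circle under $g$ need not be peripheral, so your orbit-by-orbit extension is not even well defined.

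A secondary, more minor issue: the phrase ``uniqueness of the Poincar\'e extension then implies $g\mapsto\tilde g$ respects composition'' is not right as stated, since extensions of circle homeomorphisms to the disc are far from unique; the correct fix, as in the paper, is simply to \emph{choose} one Fuchsian extension per orbit representative and propagate it equivariantly by the chosen coset transversal, checking the cocycle condition by construction rather than by uniqueness. Your final continuity step (only finitely many complementary domains exceed any diameter threshold, hence the assembled map is continuous) is essentially the same as the paper's and is fine once the preceding issues are repaired.
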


In studying the Kleinian question mentioned above, it would be useful to determine which subgroups may arise as peripheral subgroups of a relatively hyperbolic group with planar boundary having no cut points.
Note that groups with no cut points in the boundary are not necessarily rigid, nor does the action always extend to $S^2$.   This is shown explicitly in \cite{KapovichKleiner00,
HruskaStarkTran_DontAct}.

Using Theorem~\ref{thm:RigidExtends}, we characterize the peripheral subgroups in the one-ended case, even though the action may not extend to $S^2$. This result is consistent with the conjecture that such groups are virtually Kleinian. A \emph{surface group} is either the fundamental group of a closed surface or a finitely generated free group.
 
\begin{thmsurface}
Suppose $G$ is one ended, and suppose $(G, \mathcal{P})$ is relatively hyperbolic such that the boundary $\partial(G, \mathcal{P})$ is planar and without cut points.  Then each $P \in \mathcal{P}$ is virtually a surface group. 
\end{thmsurface}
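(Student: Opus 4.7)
The plan is to locate each peripheral $P$ inside a vertex of a canonical JSJ decomposition of $(G,\mathcal{P})$ and analyze the possibilities. Since $\boundary(G,\mathcal{P})$ has no cut points, the pair admits no nontrivial splitting relative to $\mathcal{P}$ over a finite or parabolic subgroup, so any essential splitting is over a two-ended subgroup. I would invoke the relative JSJ decomposition over two-ended subgroups, in the style of Bowditch's canonical splitting adapted to the relatively hyperbolic setting, yielding a graph of groups whose vertex groups are either two-ended, maximal hanging Fuchsian, or rigid. Because the JSJ is relative to $\mathcal{P}$, each $P$ is elliptic and conjugates into some vertex group $V$; endow $V$ with the natural peripheral structure $\mathcal{P}_V$ consisting of (conjugates of) peripherals of $G$ lying in $V$, together with the incident edge stabilizers, so that $(V,\mathcal{P}_V)$ is itself relatively hyperbolic.

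I would then split into cases according to the type of $V$. If $V$ is two-ended, then $P \le V$ is virtually $\Z$, a free surface group. If $V$ is hanging Fuchsian, then $V$ is virtually the fundamental group of a compact $2$-orbifold; every subgroup of such a group is itself virtually a surface group, hence so is $P$. The main case is that $V$ is rigid, where I would apply Theorem~\ref{thm:RigidExtends} to the pair $(V,\mathcal{P}_V)$. This requires verifying that $(V,\mathcal{P}_V)$ is one-ended---any further relative splitting of $V$ would assemble via the JSJ into a relative splitting of $(G,\mathcal{P})$, which is ruled out by the no-cut-point hypothesis---and that $\boundary(V,\mathcal{P}_V)$ embeds in $S^2$. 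The latter follows from the standard identification of $\boundary(V,\mathcal{P}_V)$ with the limit set of $V$ inside $\boundary(G,\mathcal{P})$, composed with the hypothesized planar embedding. Theorem~\ref{thm:RigidExtends} then produces a geometrically finite convergence action of $V$ on $S^2$. Parabolic subgroups of such actions are virtually $\Z$ or virtually $\Z \oplus \Z$, and each of these is virtually a surface group; since $P$ acts parabolically in this extended action, $P$ is virtually a surface group.

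The main obstacle lies in verifying the ingredients for Theorem~\ref{thm:RigidExtends} in the rigid vertex case, especially the embedding of $\boundary(V,\mathcal{P}_V)$ into $\boundary(G,\mathcal{P})$. This requires a careful comparison between the induced peripheral structure on $V$ and the parabolic subgroups of $G$ that intersect $V$, and is where the bulk of the technical work will lie. One must also take care in setting up the relative JSJ correctly in this relatively hyperbolic (rather than purely hyperbolic) setting, and in checking that hanging Fuchsian and two-ended vertices cannot force $P$ to be any more exotic than the above analysis allows.
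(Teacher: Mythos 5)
Your outline (locate $P$ in a vertex of the JSJ relative to $\mathcal{P}$, dispose of the two-ended and hanging Fuchsian cases, and extend the action to $S^2$ in the rigid case) matches the paper's strategy, but two steps have genuine gaps. First, in the rigid case you want to apply Theorem~\ref{thm:RigidExtends} to $(V,\mathcal{P}_V)$ where $\mathcal{P}_V$ includes the incident edge stabilizers, and you justify planarity by saying $\boundary(V,\mathcal{P}_V)$ is the limit set of $V$ in $\boundary(G,\mathcal{P})$. That identification fails for the structure you need: once the two-ended edge groups are added, the boundary is the \emph{pinched} boundary, obtained from the limit set by collapsing each edge-group limit set to a point (Lemma~\ref{lem:Dahmani}); with the unpinched structure the boundary is the limit set, but rigidity of the vertex is defined with respect to the pinched structure, so Theorem~\ref{thm:RigidExtends} does not apply there. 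Showing that the pinched boundary $M_Z$ is still planar, has no cut points, and has no cut pairs is exactly where the paper does its real work (Lemmas~\ref{lem:MZ:Connected}--\ref{lem:MZ:Planar}, resting on Moore's theorem, the null-family Lemma~\ref{lem:NullComponents}, and the inseparable cut pair analysis culminating in Corollary~\ref{cor:NoParabolicExact} and Proposition~\ref{prop:SeparableInseparable}). You flag this as ``the main obstacle'' but give no argument, and your proposed route around it is not valid as stated.

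Second, your concluding step is false: parabolic subgroups of a convergence group acting on $S^2$ by homeomorphisms need not be virtually $\Z$ or $\Z\oplus\Z$. That classification holds for Kleinian groups, not for topological convergence actions, and indeed the paper points out examples where the peripheral subgroups are higher genus surface groups. The correct deduction is only that $P$ acts properly on the plane $S^2\setminus\{a\}$, where $a$ is its fixed point, and one then needs the material on proper actions on surfaces (Theorem~\ref{thm:GeometricOrbifold} together with the \Haissinsky--Lecuire Theorem~\ref{thm:virtual} to handle a possible finite kernel, and Osin's result that peripherals are finitely generated) to conclude via Corollary~\ref{cor:VirtuallyTorsionFree} that $P$ is virtually a surface group. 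Since your intermediate claim is what smuggles in the conclusion, this is a missing idea rather than a deferrable detail; a similar finite-kernel issue also appears, less seriously, in your hanging Fuchsian case, where the fiber must be dealt with before asserting that subgroups are virtually surface groups.
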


We note that if $(G,\mathcal{P})$ is relatively hyperbolic and $\boundary(G,\mathcal{P})$ is connected with no cut points, then $G$ must be finitely generated (see Theorem~\ref{thm:Connected}).
There are relatively hyperbolic group pairs satisfying the hypotheses such that the peripheral groups are higher genus surface groups. 

Theorem~\ref{thm:VirtualSurfaceGroup} would be more straightforward if one knew that the action of $G$ on the Bowditch boundary extends to an action on $S^2$, which is clear in several special cases previously studied: when the boundary is the $2$--sphere itself or the \Sierpinski\ carpet \cite[Thm.~0.3]{Dahmaniparabolic} or a Schottky set \cite{HPWpreprint}.  
The main difficulty in the proof of Theorem~\ref{thm:VirtualSurfaceGroup} is reducing to a case in which the action of a suitable subgroup of $G$ extends to $S^2$.

Theorem~\ref{thm:VirtualSurfaceGroup} 
provides evidence for the following conjecture, which extends the Cannon Conjecture \cite{Cannon91}.

\begin{conj}
\label{conj:VirtuallyKleinian}
Suppose $G$ is one ended and $(G,\mathcal{P})$ is relatively hyperbolic. If $\partial(G, \mathcal{P})$ is planar with no cut points,
then $G$ is virtually a Kleinian group. 
\end{conj}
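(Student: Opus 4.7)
The plan is to attack the conjecture in three stages: reduce to the rigid case via a canonical graph of groups decomposition, invoke a relative version of Cannon's Conjecture on each rigid piece, and recombine the pieces into a single Kleinian group. By Theorem~\ref{thm:VirtualSurfaceGroup}, each $P \in \mathcal{P}$ is already known to be virtually a closed surface group or a finitely generated free group, and Theorem~\ref{thm:RigidExtends} provides the bridge from the abstract convergence action to an action on $S^2$ whenever the pair is rigid. These two results will do most of the work on the local pieces; the obstruction is global.

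First, I would apply the relative JSJ decomposition of $(G,\mathcal{P})$, in the sense of Bowditch or Guirardel--Levitt, splitting $G$ into vertex groups of three standard types: rigid, surface (flexible), and virtually abelian. The elements of $\mathcal{P}$ are elliptic in this splitting, so each vertex pair $(H_v,\mathcal{Q}_v)$ inherits a relatively hyperbolic structure whose Bowditch boundary sits naturally inside $\partial(G,\mathcal{P})$ and is therefore planar. On each rigid vertex group Theorem~\ref{thm:RigidExtends} upgrades the boundary action to a geometrically finite convergence group action on $S^2$, while the flexible vertices are already virtually Fuchsian by construction and the abelian vertices are trivially realizable as parabolic or loxodromic subgroups of $\operatorname{PSL}(2,\mathbb{C})$.

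Next, I would invoke a \emph{relative Cannon Conjecture} on each rigid vertex group: namely, that a relatively hyperbolic pair with planar Bowditch boundary, no cut points, and a geometrically finite convergence action on $S^2$ is virtually Kleinian. This is the principal obstacle: even the absolute Cannon Conjecture, the case $\partial G \cong S^2$, remains open, and the relative version should be at least as hard. One hope is that the additional structure supplied by the peripherals (virtually surface groups by Theorem~\ref{thm:VirtualSurfaceGroup}, hence representable as Fuchsian subgroups) constrains the quasiconformal structure on $S^2$ enough to force a Kleinian model in carpet or Schottky-set cases, generalizing the work of Bonk--Kleiner and the known geometrically finite analogues.

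Finally, I would recombine the Kleinian vertex representations using Maskit--Klein combination theorems for geometrically finite groups, matching the edge-group representations along parabolic or quasi-Fuchsian subgroups as dictated by the JSJ graph, and then pass to a torsion-free finite-index subgroup to conclude that $G$ is virtually Kleinian. Beyond the Cannon Conjecture itself, the secondary difficulty is ensuring that the rigid-piece representations chosen can be made simultaneously compatible along edge groups; in practice this requires a deformation-space argument to align boundary curves in the Maskit slice, much as in Thurston-style hyperbolization. I expect these two steps --- the relative Cannon upgrade and the compatible gluing --- to contain essentially all of the depth of the conjecture.
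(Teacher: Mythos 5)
The statement you are trying to prove is stated in the paper as a \emph{conjecture} (Conjecture~\ref{conj:VirtuallyKleinian}); the paper does not prove it, and neither do you. Your second stage is exactly the open problem: the ``relative Cannon Conjecture'' you invoke on rigid pieces is, as the paper notes, the case where the boundary is $S^2$ (Groves--Manning--Sisto, Tshishiku--Walsh) or the \Sierpinski\ carpet (Kapovich--Kleiner), both of which are open, and \Haissinsky's theorem covers only a special hyperbolic case (no \Sierpinski\ carpet in the boundary, no $2$--torsion). Having a geometrically finite convergence action on $S^2$ via Theorem~\ref{thm:RigidExtends} does not by itself yield a Kleinian model; that implication is essentially the Martin--Skora conjecture cited in the introduction, also open. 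So the proposal is a reduction-plus-wish, not a proof, and you acknowledge as much.

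Two further points would need real work even granting the conjectural input. First, the pinched boundary $\boundary(G_v,\mathcal{Q}_v)$ of a vertex group is not a subspace of $\partial(G,\mathcal{P})$ but a quotient of the limit set obtained by collapsing edge-group limit sets; its planarity is not automatic and is precisely what the paper proves via Moore's theorem and a nullity argument (Lemmas~\ref{lem:MZ:Planar} and~\ref{lem:NullComponents}). Second, the recombination step cannot produce a Kleinian structure on $G$ itself: the examples of Kapovich--Kleiner and Hruska--Stark--Tran show that groups satisfying the hypotheses need not be Kleinian, so any Maskit-style gluing must be carried out only after passing to finite-index subgroups of the vertex groups that are simultaneously compatible along all edge groups of the JSJ graph. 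Arranging such coherent finite-index subgroups requires separability-type arguments and is a genuine difficulty, not a routine application of combination theorems. What the paper actually establishes is much weaker and is offered only as evidence: the peripheral subgroups are virtual surface groups (Theorem~\ref{thm:VirtualSurfaceGroup}) and rigid pairs act on $S^2$ (Theorem~\ref{thm:RigidExtends}).
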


The case when the boundary is a $2$--sphere is the Relative Cannon Conjecture \cite{GrovesManningSisto,TshWa}, and the case when the boundary is a \Sierpinski\ carpet is a conjecture due to Kapovich--Kleiner \cite{KapovichKleiner00}.
The word ``virtually'' may be dropped in the case of the $2$--sphere or the \Sierpinski\ carpet---provided that one interprets ``Kleinian group'' to mean a group acting properly and isometrically on $\Hyp^3$.  However examples of Kapovich--Kleiner \cite{KapovichKleiner00} and Hruska--Stark--Tran \cite{HruskaStarkTran_DontAct} illustrate that groups with planar boundary as in the conjecture need not be Kleinian (even in the hyperbolic setting) so a virtual assumption is necessary. A key special case of Conjecture~\ref{conj:VirtuallyKleinian} is proved by \Haissinsky\ in \cite{Haissinsky_Invent}, the case when $G$ is a hyperbolic group such that $\partial G$ does not contain a \Sierpinski\ carpet and $G$ has no $2$--torsion.

A related conjecture of Martin--Skora \cite[Conj.~6.2]{MartinSkora89} states that any convergence group acting on $S^2$ is covered by a Kleinian group. This conjecture would not directly imply Conjecture~\ref{conj:VirtuallyKleinian} since the action of a relatively hyperbolic group on its planar boundary may not extend to an action on $S^2$.

To prove Theorem~\ref{thm:RigidExtends},
we first need a complete understanding of all cut pairs in the boundary of a relatively hyperbolic group, in order to conclude that rigid groups do not have cut pairs.  

Bowditch and Dasgupta--Hruska's proof that the Bowditch boundary is locally connected involves a general classification of the cut points of the boundary.  A major ingredient in the proof is the theorem that a cut point must always be the fixed point of a parabolic subgroup \cite{Bowditch99Connectedness,DasguptaHruska_LC}.

When considering cut pairs, work of Haulmark--Hruska \cite{HaulmarkHruska_JSJ} shows that the inseparable, loxodromic cut pairs of the boundary are closely related to splittings over $2$--ended groups.  A cut pair $\{x,y\}$ of a Peano continuum is \emph{inseparable} if $x$ and $y$ cannot be separated by any other cut pair.
It is \emph{loxodromic} if $x$ and $y$ are the fixed points of a loxodromic group element.
By analogy with the case of cut points, one might conjecture that inseparable cut pairs must always be loxodromic.
A theorem of Haulmark \cite{HaulmarkRelHyp} reduces this conjecture to showing that the boundary does not contain an inseparable \emph{parabolic} cut pair, \emph{i.e.}, an inseparable cut pair consisting of two parabolic points.

We were surprised to discover (many) relatively hyperbolic groups with planar boundaries that do contain inseparable parabolic cut pairs.   We discuss examples of inseparable parabolic cut pairs in Bowditch boundaries in Section~\ref{sec:Examples}.  
Then we show in Section~\ref{sec:inseplox} that this pathology can occur only if $G$ splits over a finite group.

\begin{corpair}
Suppose $(G,\mathcal{P})$ is relatively hyperbolic and $M=\boundary(G,\mathcal{P})$ is connected with no cut points.
If $G$ is one ended, then all inseparable cut pairs of $M$ are loxodromic. 
\end{corpair}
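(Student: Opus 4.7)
The plan is to assemble the corollary from three ingredients: Haulmark's theorem from \cite{HaulmarkRelHyp}, the main structural theorem of Section~\ref{sec:ParabolicCutPairs}, and Stallings' theorem on ends of groups. Recall that every point of $M = \boundary(G,\mathcal{P})$ is either a conical limit point or a bounded parabolic point.

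Let $\{x,y\}$ be an inseparable cut pair of $M$. By Haulmark's theorem, if at least one of $x$ or $y$ is conical, then $\{x,y\}$ is loxodromic; so the only remaining case is that both $x$ and $y$ are parabolic fixed points, that is, $\{x,y\}$ is an inseparable \emph{parabolic} cut pair. It therefore suffices to show that no such pair can exist under our hypotheses.

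To rule out inseparable parabolic cut pairs, I would appeal to the main result of Section~\ref{sec:ParabolicCutPairs}, advertised in the introduction above: when $M$ is connected with no cut points, the presence of an inseparable parabolic cut pair forces $G$ to split nontrivially over a finite subgroup. By Stallings' theorem, a finitely generated group splits over a finite subgroup precisely when it has more than one end. Since $G$ is finitely generated (by Theorem~\ref{thm:Connected}, using that $M$ is connected) and one-ended by hypothesis, no such splitting exists, so $M$ contains no inseparable parabolic cut pair. Hence every inseparable cut pair of $M$ is loxodromic.

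The main obstacle lies entirely in the second step, which is the substance of Section~\ref{sec:ParabolicCutPairs} and is not formal. One expects the splitting to be built from the two components of $M \setminus \{x,y\}$, by organizing $G$--invariant quasiconvex subsets of the cusped space whose pairwise frontier is controlled by finite subgroups of the parabolic stabilizers of $x$ and $y$, yielding a nontrivial action on a Bass--Serre tree with finite edge stabilizers. Steps one and three, by contrast, are short applications of existing theory once Haulmark's theorem and Stallings' theorem are in hand.
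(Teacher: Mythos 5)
Your opening reduction (via Haulmark) and your closing appeal to Stallings are fine, but the middle step is a genuine gap: the claim that an inseparable parabolic cut pair forces a nontrivial splitting of $G$ over a finite subgroup is not a result you can cite --- it is (the contrapositive of) precisely the statement being proved. There is no standalone theorem in the paper asserting it; the nearest candidate, Theorem~\ref{thm:nocutpairs}, has different hypotheses (rigidity, i.e.\ no elementary splittings relative to $\mathcal{P}$) and its proof runs through Proposition~\ref{prop:inseparable}, which in turn rests on Proposition~\ref{prop:InseparableExact}/Corollary~\ref{cor:NoParabolicExact}, the very corollary at issue; so the citation is circular. Your fallback sketch --- quasiconvex subsets of the cusped space with frontiers controlled by finite subgroups of the two parabolic stabilizers, assembled into a Bass--Serre tree --- is exactly the part that requires an actual construction, and none is supplied: a single cut pair does not by itself produce a $G$--tree, and nothing in your sketch explains why the resulting action (if one exists) would be nontrivial.

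What the paper actually does is build that tree globally and run the argument in the other direction. Section~\ref{sec:Peano} makes the family of \emph{all} inseparable cut pairs into a pretree via betweenness, proves discreteness of intervals using local connectedness and the absence of cut points (Proposition~\ref{prop:Discrete}), and completes it to a simplicial tree $\mathcal{T}_M$ (Proposition~\ref{prop:simplicialtree}). Proposition~\ref{prop:MinimalTree} shows $G$ acts minimally on $\mathcal{T}_M$, using density of orbits in $M$. One-endedness then enters exactly where you wanted it: since $G$ is finitely generated (Theorem~\ref{thm:Connected}) and one ended, Stallings' theorem forbids an edge of a minimal nontrivial $G$--tree from having finite stabilizer, so the stabilizer $H$ of each inseparable cut pair $C$ is infinite; Tukia's convergence-group results \cite{Tukia94} (Theorems 2S and 2R) then give $\Lambda H = C$ and a loxodromic element of $H$ whose fixed points are exactly $C$. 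This proves directly that every inseparable cut pair is loxodromic, with no need for Haulmark's reduction. Your unused observation that the stabilizer of a parabolic pair is finite is correct and could be combined with this tree to argue in your contrapositive direction, but the tree construction and the minimality of the action are the substance of the proof, and they are missing from your proposal.
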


The corollary does not involve planarity, but rather applies broadly, far beyond the low-dimensional setting of Conjecture~\ref{conj:VirtuallyKleinian}.
This corollary plays a key role in the proof of Theorem~\ref{thm:VirtualSurfaceGroup}, allowing us to deduce that rigid pieces have no cut pairs.
This conclusion allows us to establish that the action on the planar boundary extends to an action on $S^2$.

In Section~\ref{sec:prelim} we discuss background on convergence groups, relatively hyperbolic groups, properties of their boundaries, and some special subgroups. 
In Section~\ref{sec:ProperActions}, we review why proper actions by homeomorphisms on the plane are tame.
Section~\ref{sec:Examples} is dedicated to the examples discussed above.  In Section~\ref{sec:Peano} we constrict a simplicial tree dual to the family of all inseparable cut pairs in the boundary. This tree is a key tool for relating inseparable parabolic cut pairs to splittings over finite groups in Section~\ref{sec:inseplox}.  We further develop this connection in Section~\ref{sec:ParabolicCutPairs}, where we show in Theorem~\ref{thm:nocutpairs}  
that rigid one-ended relatively hyperbolic group boundaries (which are not $S^1$) do not have cut pairs.
Finally, in Section~\ref{sec:PeripheralSubgroups}, we prove Theorems \ref{thm:VirtualSurfaceGroup} and~\ref{thm:RigidExtends}.

\subsection{Acknowledgements}
The authors thank Peter \Haissinsky\ and Craig Guilbault for helpful discussions and also thank the referee for helpful feedback.
The first author was partially supported by grants \#318815 and \#714338 from the Simons Foundation, and the second author by NSF grant DMS \#1709964. 

\section{Preliminaries}
\label{sec:prelim}

This section collects various background results from the literature.

\begin{defn} \label{def:convergence} 
A \emph{convergence group action} of a countable group $G$ on a metrizable compactum $M$ is an action by homeomorphisms such that for any sequence $(g_i)$ of distinct elements in $G$
there is a subsequence $(g_{n_i})$ such that  there exist points $\zeta,\xi \in M$ such that
\[
   g_{n_i} \big| \bigl( M\setminus\{\zeta\}\bigr) \to \xi
\]
uniformly on compact sets. Such a subsequence is a \emph{collapsing subsequence}.

A point $\zeta \in M$ is a \emph{conical limit point} if there exists a sequence $(g_i)$ in $G$ and a pair of distinct points $\xi_0\ne\xi_1 \in M$ such that
\[
   g_i \big| \bigl( M \setminus \{\zeta\}\bigr) \to \xi_0
   \qquad \text{and} \qquad
   g_i(\zeta) \to \xi_1.
\]
A point $\eta\in M$ is \emph{bounded parabolic} if
its stabilizer acts properly and cocompactly on $M \setminus\{\eta\}$.
A convergence group acting on $M$ is \emph{geometrically finite} if every point of $M$ is either a conical limit point or a bounded parabolic point.  The stabilizers of the bounded parabolic points are called \emph{maximal parabolic subgroups}.  
\end{defn}

\begin{defn}[Relatively hyperbolic]
A \emph{group pair} consists of a group $G$ and a family $\mathcal{P}$ of infinite subgroups that is closed under conjugation.
A group pair $(G,\mathcal{P})$ is \emph{relatively hyperbolic} if 
$G$ admits a geometrically finite convergence group action on a metrizable compactum $M$ such that
$\mathcal{P}$ is the set of maximal parabolic subgroups.
If the pair $(G,\mathcal{P})$ is relatively hyperbolic, the family $\mathcal{P}$ is a \emph{peripheral structure} and the subgroups $P \in \mathcal{P}$ are \emph{peripheral subgroups} of $(G,\mathcal{P})$.
\end{defn}

Any two compacta $M$ and $M'$ as above are $G$--equivariantly homeomorphic by \cite{Yaman04,BowditchRelHyp,GerasimovPotyagailo16_Similar}.
The \emph{Bowditch boundary} of a relatively hyperbolic pair  $\partial (G, \mathcal{P})$ is defined to be any metrizable compactum $M$ admitting a geometrically finite action as above.

\begin{defn}
\label{def:RelQC}
Assume $(G,\mathcal{P})$ is relatively hyperbolic.
A subgroup $H\le G$ is \emph{elementary} if the limit set $\Lambda H$ of $H$ in $\boundary (G,\mathcal{P})$ has fewer than three points.
A subgroup $H\le G$ is \emph{relatively quasiconvex} if $H$ is elementary or if the action of $H$ on its limit set is a geometrically finite convergence action.
A relatively quasiconvex subgroup $H$ inherits a natural relatively hyperbolic structure $(H, \mathcal{P}_H)$ where $\mathcal{P}_H$ is the set of all infinite subgroups of the form $H\cap P$ for $P \in \mathcal{P}$.  Furthermore, the Bowditch boundary of $(H, \mathcal{P}_H)$ is the limit set of $H$ in $\partial(G, \mathcal{P})$.  See \cite{Dahmani03Combination,Hruska10RelQC} for more information.  
\end{defn} 

A \emph{Peano continuum} is a connected, locally connected compact metrizable space. A \emph{cut point} of a connected space $X$  is a point $x \in X$ such that $X\setminus x$ is not connected.  A \emph{local cut point} is a point that is a cut point of some connected open subset of $X$.

The following result was established by Bowditch \cite{Bowditch99Connectedness} under hypotheses on the peripheral groups and by Dasgupta--Hruska \cite{Dasgupta_Thesis,DasguptaHruska_LC} in the general case.

\begin{thm}[Splittings and the boundary]
\label{thm:Connected}
Let $(G,\mathcal{P})$ be relatively hyperbolic.
The boundary $M = \boundary(G,\mathcal{P})$ is connected if and only if $G$ is one ended relative to $\mathcal{P}$; \emph{i.e.}, $G$ does not split relative to $\mathcal{P}$ over a finite subgroup \textup{(}see Definition~\ref{def:relsplitting}\,\textup{)}.

Suppose $M$ is connected.
Then $M$ is a Peano continuum.
Furthermore, $M$ has no cut point if and only if $G$ does not split relative to $\mathcal{P}$ over a parabolic subgroup, in which case $G$ is finitely generated.
\end{thm}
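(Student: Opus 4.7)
The plan is to assemble three established statements about Bowditch boundaries of relatively hyperbolic pairs, each going back to Bowditch's program on convergence groups and splittings and later extended to the possibly non-finitely-generated setting (so care with generation hypotheses is essential throughout).

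For the first assertion, I would handle the two implications separately. The easy direction is: if $G$ splits relative to $\mathcal{P}$ over a finite subgroup $F$, one takes the Bass--Serre tree $T$ of the splitting, which is a $G$--tree with finite edge stabilizers such that every $P \in \mathcal{P}$ is conjugate into a vertex group. Because $F$ is finite it has no limit points in $M = \boundary(G,\mathcal{P})$, and a standard convergence group argument (see Bowditch's treatment in \cite{BowditchRelHyp}) shows that any edge of $T$ partitions $M$ into the disjoint (open) union of the limit sets of the two half-trees. The converse is the relatively hyperbolic analog of Stallings' Ends Theorem, due to Bowditch in the finitely generated case and extended in full generality by Gerasimov--Potyagailo~\cite{GerasimovPotyagailo15_NonFG}; for this direction I would cite that result directly.

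For the second assertion (local connectedness given connectedness), I would invoke Bowditch's local connectedness theorem. The classical strategy, carried out in \cite{Bowditch99Connectedness} for hyperbolic groups and later adapted to the relatively hyperbolic setting, is to argue that every proper subcontinuum of $M$ is itself locally connected and then deduce local connectedness of $M$ by a no-weak-cut-point argument built on Bowditch's theorem that every global cut point of $M$ is a parabolic fixed point (see \cite{Bowditch99Boundaries,Bowditch_Peripheral}). I would outline the reduction and cite the peripheral-splitting structure used to analyze the cut point tree.

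For the third assertion, the easy direction runs parallel to part one: a splitting of $G$ relative to $\mathcal{P}$ over a parabolic subgroup $P \in \mathcal{P}$ yields a Bass--Serre tree on which $P$ stabilizes an edge; the (unique) fixed point of $P$ in $M$ then separates the limit sets of the two sides of that edge and so is a global cut point. The harder direction is Bowditch's peripheral splitting theorem from \cite{Bowditch_Peripheral}: the cut points of $M$ organize into a $G$--tree whose edge stabilizers are (virtually) parabolic, producing a nontrivial splitting of $G$ relative to $\mathcal{P}$ over a parabolic subgroup. The finiteness-of-generation clause is embedded in the statement because a connected Bowditch boundary without cut points forces $G$ to be finitely generated, a fact that follows from \cite{GerasimovPotyagailo15_NonFG} (a non-finitely-generated relatively hyperbolic pair necessarily has a parabolic cut point corresponding to the failure of cocompactness on the complement of that point).

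The main obstacle I anticipate is reconciling the non-finitely-generated setting with the classical statements, which typically assume finite generation. In particular, several of Bowditch's original arguments invoke local finiteness of a cusped graph or cocompactness of a model space, and one must verify that the Gerasimov--Potyagailo framework supplies the correct substitute input before citing the conclusions above. Once this is handled, each of the three parts reduces to an explicit citation combined with the short Bass--Serre argument for the respective ``easy'' direction.
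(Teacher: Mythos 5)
There is a genuine gap, and it sits exactly where you flagged your ``main obstacle'' without resolving it: the local connectedness assertion. The theorem is stated for an arbitrary relatively hyperbolic pair---$G$ only countable, and no hypotheses whatsoever on the peripheral subgroups---whereas Bowditch's local connectedness theorem (\cite[Thm.~1.5]{Bowditch_Peripheral}) carries tameness hypotheses on the peripherals (finitely presented, one- or two-ended, no infinite torsion subgroups) and finite generation of $G$. So ``invoke Bowditch's local connectedness theorem'' does not prove the statement in the claimed generality, and verifying that \cite{GerasimovPotyagailo15_NonFG} ``supplies the correct substitute input'' is not a plan: that paper does not give local connectedness. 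The missing idea, which is the actual content of the paper's proof, is a decomposition-and-reassembly argument: take the maximal peripheral splitting relative to $\mathcal{P}$ with finitely generated edge groups (\cite[\S 3.8]{GuirardelLevitt}), observe that each nonparabolic vertex group is relatively quasiconvex (\cite[Lem.~4.9]{BigdelyWise_Combination}), hence finitely generated with a finitely generated peripheral structure by \cite{Osin06} and with connected boundary; apply Dasgupta's theorem \cite{Dasgupta_Thesis} (which removes Bowditch's tameness hypotheses in the finitely generated case) to each vertex boundary; then glue, using Dahmani's combination theorem \cite{Dahmani03Combination} together with \cite[Prop.~7.4]{Bowditch_Peripheral}, to conclude that $M$ itself is locally connected. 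Without some such reduction your argument only covers a strictly smaller class of pairs than the theorem asserts.

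Two secondary points. First, your claim that ``no cut point forces $G$ finitely generated'' follows from \cite{GerasimovPotyagailo15_NonFG} via ``failure of cocompactness on the complement of that point'' is not right as stated: bounded parabolic points have cocompact stabilizer action on the complement by definition of geometric finiteness, so that mechanism proves nothing. The paper obtains both ``$G$ not finitely generated $\Rightarrow$ cut point'' and ``parabolic splitting $\Rightarrow$ cut point'' from a single source: nontriviality of the Guirardel--Levitt splitting in either case, combined with Dahmani's description of the boundary of the resulting graph of groups; and the converse (finitely generated, no parabolic splitting $\Rightarrow$ no cut point) is \cite[Thm.~9.2]{Bowditch_Peripheral}, which needs the already-established local connectedness as input---so the order of the three assertions matters. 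Second, for the connectedness criterion the paper does not cite a non-finitely-generated ends theorem; it extends Bowditch's argument from \cite[\S 10]{BowditchRelHyp} by replacing his proper hyperbolic space with the augmented (cusped) space of \cite[Def.~4.3]{Hruska10RelQC}. Your citation of \cite{GerasimovPotyagailo15_NonFG} for a relative Stallings theorem in full generality should be checked; I do not believe that statement is in that paper, so either supply the augmented-space argument or a reference that actually contains it. Your Bass--Serre arguments for the two ``easy'' directions are fine in spirit, modulo the standard care that the relevant limit sets on both sides of the edge are nondegenerate.
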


\begin{rem}[Changing the peripheral structure]
\label{rem:ChangingPeripheral}
If $(G,\mathcal{P})$ is relatively hyperbolic, one can add finitely many conjugacy classes of maximal, nonparabolic two-ended subgroups to $\mathcal{P}$ to form a new group pair $(G,\mathcal{P}')$ that is again relatively hyperbolic (see Dahmani and Osin \cite{Dahmani03Combination,Osin06_Elementary}).
Conversely, if $(G,\mathcal{P}')$ is relatively hyperbolic, and $\mathcal{P}$ is formed from $\mathcal{P}'$ by removing finitely many conjugacy classes of two-ended subgroups then $(G,\mathcal{P})$ is relatively hyperbolic. For a proof, see \Drutu--Sapir \cite{DrutuSapirTreeGraded} if $G$ is finitely generated and Matsuda--Oguni--Yamagata \cite{Matsudarel} in general.

The Bowditch boundary does change when one changes the peripheral structure, as shown in a general setting by Wen-yuan Yang \cite{Yang14_Peripheral}.  For instance, if one adds finitely many conjugacy classes of maximal two-ended nonparabolic subgroups to $\mathcal{P}$, then the new boundary $\partial(G, \mathcal{P}')$ is obtained from $\partial(G, \mathcal{P})$ by identifying the limit set of each $P \in \mathcal{P}' \setminus \mathcal{P}$ in $\partial(G, \mathcal{P})$ to a point. See Dahmani \cite{Dahmani03Combination} for related results.
\end{rem}

\begin{defn}
\label{def:relsplitting}  
Suppose $(G,\mathcal{P})$ is relatively hyperbolic. A \emph{splitting relative to $\mathcal{P}$} is an action of $G$ on a simplicial tree $T$ without inversions such that each peripheral subgroup $P \in \mathcal{P}$ stabilizes a vertex of $T$.
\end{defn} 

Let $\mathcal{E}$ be a set of subgroups of $G$. An $\mathcal{E}$--splitting relative to $\mathcal{P}$ is a splitting of $G$ relative to $\mathcal{P}$ where each edge stabilizer is in $\mathcal{E}$. 
An \emph{elementary splitting relative to $\mathcal{P}$} is the case where $\mathcal{E}$ is the family of all elementary subgroups.

\begin{defn}[Pinched peripheral structure]
\label{def:PinchedPeripheral}
If $G_v$ is a vertex stabilizer of an elementary splitting relative to $\mathcal{P}$, there is a natural peripheral structure obtained by adding the edge stabilizers of incident edges in the splitting.  Since all edge groups are elementary, and hence relatively quasiconvex, each vertex stabilizer $G_v$ is also relatively quasiconvex (see Bigdely--Wise \cite[Lem.~4.9]{BigdelyWise_Combination} or Guirardel--Levitt \cite[Prop.~3.4]{GuirardelLevitt15_AutRelHyp}). Each vertex stabilizer $H=G_v$ has a natural peripheral structure $\mathcal{P}_H$ described above, which we denote here by $\mathcal{P}_v$. Adding the finite and $2$--ended nonparabolic groups that stabilize edges incident to $v$ produces a new relatively hyperbolic structure $\mathcal{Q}_v$ by Remark~\ref{rem:ChangingPeripheral}.  Since the Bowditch boundary of $(G_v, \mathcal{Q}_v)$ is obtained from the boundary of $(G_v, \mathcal{P}_v)$ by pinching, we call this new peripheral structure the \emph{pinched peripheral structure} of $G_v$. 
\end{defn}

\begin{defn}[Quadratically hanging]
\label{def:QH}
A vertex stabilizer $G_v$ of such a splitting is \emph{quadratically hanging} if it is an extension
\[
   1 \to F \to G_v \to \pi_1(\Sigma) \to 1,
\]
where $\Sigma$ is a complete, finite area hyperbolic $2$--orbifold (possibly with geodesic boundary and cusps) and $F$ is an arbitrary finite group called the \emph{fiber}.  We also require that each peripheral subgroup of the pinched peripheral structure $\mathcal{Q}_v$ is contained in the pre-image in $G_v$ of a boundary or cusp subgroup of $\pi_1(\Sigma)$.
\end{defn}

\begin{lem}
\label{lem:CircleBoundary}
Let $(G,\mathcal{P})$ be relatively hyperbolic with connected boundary.
Let $G_v$ be a vertex stabilizer of an elementary splitting relative to $\mathcal{P}$.
Then $G_v$ is quadratically hanging with finite fiber if and only if the pinched boundary $\boundary(G_v,\mathcal{Q}_v)$ is homeomorphic to a circle $S^1$.
\end{lem}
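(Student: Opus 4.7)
The plan is to prove the two implications separately, both reducing to the classification of geometrically finite convergence group actions on $S^1$.

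First, for the ``only if'' direction, I would assume $G_v$ is quadratically hanging with finite fiber, giving an exact sequence $1 \to F \to G_v \to \pi_1(\Sigma) \to 1$ with $F$ finite. Since the Bowditch boundary is preserved by passage to the quotient by a finite normal subgroup (provided one uses compatible peripheral structures), it suffices to show $\boundary(\pi_1(\Sigma), \mathcal{R}) \isom S^1$, where $\mathcal{R}$ is the peripheral structure consisting of the boundary and cusp subgroups of $\pi_1(\Sigma)$. I would verify this directly by viewing $\pi_1(\Sigma)$ as a Fuchsian subgroup of $\Isom(\Hyp^2)$: if $\Sigma$ has only cusps, then $\pi_1(\Sigma)$ is a lattice and its limit set in $S^1 = \boundary \Hyp^2$ is all of $S^1$; if $\Sigma$ has geodesic boundary, the limit set is the complement in $S^1$ of countably many disjoint open arcs (the endpoints of lifts of boundary geodesics), and pinching the endpoints of each such arc together---the topological effect of adding the boundary subgroups to the peripheral structure, per Remark~\ref{rem:ChangingPeripheral}---collapses each arc to a point, yielding $S^1$.

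For the ``if'' direction, I would start from the observation that $G_v$ acts on $\boundary(G_v, \mathcal{Q}_v) \isom S^1$ as a geometrically finite convergence group. A standard argument shows that the kernel $F$ of this action must be finite: otherwise, infinitely many distinct elements of $F$ would act as the identity, contradicting the collapsing behavior required of a convergence group on a compactum of cardinality at least three. Setting $\Gamma = G_v / F$, the quotient acts faithfully on $S^1$ as a geometrically finite convergence group, so by the convergence group theorem of Tukia, Gabai, and Casson--Jungreis, $\Gamma$ is topologically conjugate to a Fuchsian subgroup of $\Isom(\Hyp^2)$. Geometric finiteness then forces $\Gamma \isom \pi_1(\Sigma)$ for a complete, finite area hyperbolic $2$-orbifold $\Sigma$, realized as the convex core of the Fuchsian action, with cusps at parabolic fixed points and geodesic boundary components where loxodromic axes have been pinched. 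The bounded parabolic points of $\mathcal{Q}_v$ correspond either to cusps of $\Sigma$ (for elements of $\mathcal{P}_v$) or to pinched boundary components (for the $2$-ended nonparabolic edge stabilizers added when forming $\mathcal{Q}_v$ from $\mathcal{P}_v$), so each peripheral subgroup of $\mathcal{Q}_v$ has image in $\pi_1(\Sigma)$ contained in a cusp or boundary subgroup, establishing the QH structure.

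The hard part will be the careful invocation of the convergence group theorem in the relatively hyperbolic setting, particularly verifying that the topological conjugacy between $\Gamma$ and a Fuchsian group respects the peripheral data---so that bounded parabolic points of $\mathcal{Q}_v$ coming from $2$-ended loxodromic subgroups correspond precisely to boundary components of $\Sigma$ and not to some other internal feature of the orbifold. This bookkeeping, though conceptually straightforward, is where most of the care in the argument lies.
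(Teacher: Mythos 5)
Your ``if'' direction is essentially the paper's argument: finite kernel of the geometrically finite convergence action on $S^1$, the convergence group theorem of Tukia/Gabai/Casson--Jungreis, and geometric finiteness giving a finite-covolume Fuchsian quotient. One small correction there: since the pinched boundary is all of $S^1$ and the action is geometrically finite, the Fuchsian group has finite covolume and \emph{all} peripheral subgroups of $\mathcal{Q}_v$ (including the pinched $2$--ended edge groups) become cusp subgroups of $\Sigma$; they do not appear as geodesic boundary components of a convex core, so your description of ``geodesic boundary components where loxodromic axes have been pinched'' is off, though harmless for verifying Definition~\ref{def:QH}.

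The genuine gap is in the ``only if'' direction. Knowing that $G_v$ is quadratically hanging with finite fiber only tells you (Definition~\ref{def:QH}) that each peripheral subgroup of $\mathcal{Q}_v$ has image \emph{contained in} some boundary or cusp subgroup of $\pi_1(\Sigma)$; it does not tell you that every boundary subgroup of $\pi_1(\Sigma)$ actually arises (up to commensurability) from a member of $\mathcal{Q}_v$. Your computation identifies $\boundary(G_v,\mathcal{Q}_v)$ with $\boundary(\pi_1(\Sigma),\mathcal{R})$, where $\mathcal{R}$ consists of \emph{all} boundary and cusp subgroups, and this identification is exactly what is unproved: if $\Sigma$ had an unused boundary component, the pinched boundary would be obtained by collapsing only some of the complementary arcs of the Fuchsian limit set and would not be a circle (indeed $(G_v,\mathcal{Q}_v)$ could even have disconnected boundary). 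This is where the standing hypothesis that $\boundary(G,\mathcal{P})$ is connected enters, and you never use it. The paper closes precisely this gap: by Theorem~\ref{thm:Connected}, $G$ does not split over a finite subgroup relative to $\mathcal{P}$, and then Guirardel--Levitt (their Lemma~5.16) shows every boundary component of $\Sigma$ is used, i.e.\ the lift of each boundary subgroup is parabolic in $\mathcal{Q}_v$; only then does the finite-covolume picture, and hence the circle boundary, follow. You need to add this step (or an equivalent argument ruling out unused boundary components) before your pinching computation is valid.
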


\begin{proof}
By Theorem~\ref{thm:Connected}, the group $G$ does not split relative to $\mathcal{P}$ over any finite subgroup.  Therefore, if $G_v$ is quadratically hanging with finite fiber, every boundary component of $\Sigma$ is used;
in other words, by Guirardel--Levitt \cite[Lem.~5.16]{GuirardelLevitt} the lift to $G_v$ of each boundary subgroup of $\pi_1(\Sigma)$ is parabolic in the pinched peripheral structure $\mathcal{Q}_v$.
It follows that with this peripheral structure, the Fuchsian group $\pi_1(\Sigma)$ has finite covolume, so the boundary $\boundary (G_v,\mathcal{Q}_v)$ is homeomorphic to $S^1$.

For the converse, assume the pinched boundary is a circle.  Then $G_v$ has a geometrically finite convergence group action on $S^1$ such that $\mathcal{Q}_v$ is a set of representatives of the maximal parabolic subgroups.
If $F$ is the finite kernel of the action, then $G_v/F$ is a faithful convergence group on $S^1$ with limit set $S^1$.
It follows from \cite{Tukia_Fuchsian,Gabai92,CassonJungreis94} that the action on $S^1$ extends to an isometric action on $\Hyp^2$. Thus, $G_v/F$ is a geometrically finite Fuchsian group of the first kind; \emph{i.e.}, it acts on $\Hyp^2$ with finite covolume so that the subgroups in $\mathcal{Q}_v$ are parabolic (see, for instance, \cite[Chap.~10]{Beardon83_DiscreteGroups}).
\end{proof}

\begin{defn}[Rigid]
\label{def:Rigid}
A relatively hyperbolic pair $(G,\mathcal{P})$ is \emph{rigid} if $G$ has no elementary splittings relative to $\mathcal{P}$.

If $(G,\mathcal{P})$ is relatively hyperbolic, a vertex stabilizer $G_v$ of an elementary splitting relative to $\mathcal{P}$ is called \emph{rigid} if 
$(G_v,\mathcal{Q}_v)$ is rigid in the above sense, where $\mathcal{Q}_v$ is the corresponding pinched peripheral structure.
\end{defn}

Let $M$ be a Peano continuum.
A \emph{cut pair} is a pair of distinct points $\{x,y\}$ in $M$ such that $M\setminus\{x,y\}$ is disconnected, but neither $x$ nor $y$ is a cut point of $M$.
A cut pair $\{x,y\}$ is \emph{inseparable} if its points are not separated by any other cut pair. 
Let $x$ be a local cut point that is not a cut point of $M$. The \emph{valence} of $x$ in $M$ is the number of ends of $M \setminus x$. Such a cut pair $\{x,y\}$ in $M$  is \emph{exact} if the number of components of $M \setminus \{x,y\}$ is equal to the valence of both $x$ and $y$. 

\begin{thm}[\cite{GuirardelLevitt_Canonical,HaulmarkHruska_JSJ}]
\label{thm:JSJ} 
Let $(G,\mathcal{P})$ be relatively hyperbolic with $M=\boundary (G,\mathcal{P})$ connected.
For any elementary splitting of $G$ relative to $\mathcal{P}$, the vertex and edge stabilizers are relatively quasiconvex.
There exists an elementary splitting of $G$ relative to $\mathcal{P}$, called the \emph{JSJ decomposition}, such that each vertex stabilizer is exactly one of the following types:
\begin{enumerate}
    \item a nonparabolic maximal $2$--ended subgroup whose limit set is an exact inseparable cut pair of $M$,
    \item a peripheral subgroup whose limit set is a single point that is a cut point of $M$,
    \item a quadratically hanging subgroup with finite fiber, or
    \item a rigid subgroup whose limit set is not separated by any cut point or exact cut pair of $M$.
\end{enumerate}
The JSJ decomposition $T$ is canonically determined by the topology of $M$.  Every homeomorphism of $M$ induces a type-preserving automorphism of $T$.
\end{thm}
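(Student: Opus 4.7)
The plan is to combine two major inputs, both cited in the statement: the general JSJ theory for splittings over elementary subgroups, due to Guirardel--Levitt, and the topological characterization of vertex types for relatively hyperbolic groups, due to Haulmark--Hruska. Before invoking either, I would first verify the relative quasiconvexity claim, which applies to any elementary splitting. Edge stabilizers are elementary by hypothesis, hence relatively quasiconvex directly from Definition~\ref{def:RelQC}. Vertex stabilizers then inherit relative quasiconvexity from the combination result of Bigdely--Wise already used in the proof of Theorem~\ref{thm:Connected}. This step is essentially formal.

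Next I would construct the canonical JSJ and classify its vertices. Apply Guirardel--Levitt's tree of cylinders construction to the class of elementary subgroups (with commensurability as the admissible equivalence relation) to produce a canonical $G$--tree $T$ whose edge stabilizers are elementary and whose vertex stabilizers are relatively quasiconvex. Each non-elementary vertex stabilizer $G_v$ inherits the pinched peripheral structure $\mathcal{Q}_v$, so that $\Lambda G_v = \boundary(G_v, \mathcal{Q}_v) \subset M$. The four listed types then correspond to four topologically distinguishable behaviors of $\Lambda G_v$: edge-type vertices are the two-ended commensurability classes of the Haulmark--Hruska correspondence, whose fixed point sets are exactly the exact inseparable cut pairs of $M$; parabolic cut-point vertices arise from splittings over parabolic subgroups, whose single limit points are global cut points of $M$ by Bowditch's cut-point theorem; QH vertices with finite fiber are detected topologically by the pinched boundary being a circle, via Lemma~\ref{lem:CircleBoundary}; and everything else is rigid, where the topological criterion---limit set not separated by any exact cut pair---follows because any such cut pair would produce a further elementary splitting of $(G_v, \mathcal{Q}_v)$, contradicting the maximality of the JSJ.

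Finally, canonicity is automatic from the construction: the tree of cylinders is invariant under any automorphism of $G$ preserving the peripheral structure and the class of elementary subgroups, and the four vertex types are defined purely in terms of topological invariants of $M$ (cut points, cut pairs, circle boundary, exactness of cut pairs). Hence any self-homeomorphism of $M$ induces a type-preserving automorphism of $T$. The main technical obstacle---and the substantive content contributed by Haulmark--Hruska---is establishing the cut-pair to splitting correspondence: showing that every exact inseparable cut pair of $M$ corresponds to a two-ended edge stabilizer in $T$, and conversely that the limit set of a rigid vertex cannot be cut by any exact pair. This requires adapting Papasoglu-type cut-pair arguments from the word hyperbolic setting to the relatively hyperbolic setting, carefully controlling the interaction between cut pairs and parabolic points; the remainder of the proof is bookkeeping.
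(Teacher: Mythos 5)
This statement is not proved in the paper at all: it is imported verbatim as background, with the proof living in the cited works of Guirardel--Levitt and Haulmark--Hruska, so there is no internal argument to compare yours against. As a reconstruction of the cited literature your outline is broadly faithful -- relative quasiconvexity of edge groups is immediate from Definition~\ref{def:RelQC} and passes to vertex groups via the Bigdely--Wise combination lemma, and the canonical tree is indeed obtained by a Guirardel--Levitt tree-of-cylinders type construction over elementary subgroups (though the admissible relation is co-elementarity, i.e.\ two elementary subgroups are equivalent when they generate an elementary subgroup, which agrees with commensurability only on the loxodromic two-ended classes, not on parabolic ones).

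The one place where your sketch papers over the essential point is the final canonicity claim. Invariance of the tree of cylinders under automorphisms of $G$ preserving $\mathcal{P}$ does \emph{not} yield the stated conclusion that every homeomorphism of $M$ induces an automorphism of $T$: a self-homeomorphism of the Bowditch boundary need not be induced by any group automorphism. What the theorem asserts -- and what constitutes the substantive contribution of Haulmark--Hruska beyond Guirardel--Levitt -- is that the entire simplicial structure of $T$ (not merely the four vertex types) can be read off from the pattern of global cut points and exact inseparable cut pairs in $M$, in the spirit of Bowditch's construction of the JSJ tree from the boundary of a hyperbolic group. You gesture at this by saying the types are ``defined purely in terms of topological invariants of $M$'' and by identifying the cut-pair-to-splitting correspondence as the technical heart, but the inference ``hence any self-homeomorphism of $M$ induces a type-preserving automorphism of $T$'' needs the tree itself to be built from $M$, and that construction (together with showing it coincides with the algebraically defined canonical splitting) is precisely the nontrivial content you would have to supply rather than treat as bookkeeping. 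Relatedly, your justification of type (4) -- that an exact cut pair crossing a rigid vertex's limit set ``would produce a further elementary splitting'' -- is only straightforward for inseparable cut pairs; separable exact cut pairs organize into necklaces detected by QH vertices, and ruling them out of rigid limit sets again requires the boundary-to-tree machinery, not just maximality of the JSJ.
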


Vertex stabilizers that are quadratically hanging with finite fiber can also be rigid according to the definitions above (see, for example, Guirardel--Levitt \cite[\S 5]{GuirardelLevitt}). However, if a rigid subgroup $G_v$ has the property that its limit set is not separated by any exact cut pair, then every finite-index subgroup of $G_v$ also has this property.  Therefore, any quadratically hanging subgroup has its limit set separated by an exact cut pair. 

\section{Proper group actions on surfaces}
\label{sec:ProperActions}

The action of a relatively hyperbolic group on its Bowditch boundary is a topological action.
It is well known that the theory of topological surfaces is essentially equivalent to the theory of smooth surfaces.  In particular, a folk theorem states that a topological surface does not admit ``wild'' proper actions by homeomorphisms.
We sketch a proof of this result below for the sake of completeness.

\begin{thm}
\label{thm:GeometricOrbifold}
Suppose a group $G$ acts properly by homeomorphisms on a connected surface $X$.
Then $X$ admits a complete metric of constant curvature modeled on either $S^2$, $\E^2$, or $\Hyp^2$ such that the action is isometric.
\end{thm}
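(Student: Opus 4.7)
The plan is to realize $X/G$ as a $2$--orbifold and invoke the uniformization theorem for $2$--orbifolds. Properness implies each stabilizer $G_x$ is finite, and a $G_x$--invariant closed disk neighborhood $D\subset X$ of $x$ can be produced by a standard averaging/shrinking argument on a small disk neighborhood of $x$ (say by putting a smooth structure on $X$, averaging a Riemannian metric over the finite orbit, and taking a small geodesic ball). The action of $G_x$ on $D$ is faithful---any nontrivial kernel would fix a disk and violate properness---and orientation preserving, so Proposition~\ref{prop:Rotation} provides a topological conjugation of this action to a cyclic rotation action on the standard disk.

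These local rotation models assemble into a $2$--orbifold structure on $X/G$ whose only singularities are isolated cone points: an orbit $Gx$ with $\abs{G_x}=n>1$ becomes a cone point of order $n$, and orientation preservation rules out mirror boundary arcs and corner reflectors. The quotient map $\pi\colon X \to X/G$ is then an orbifold covering map, with $X$ carrying the trivial manifold orbifold structure. Consequently $X/G$ is a \emph{good} $2$--orbifold in the sense of Thurston; in particular, $X/G$ is not a teardrop or a football with unequal cone orders.

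Now invoke the uniformization theorem for $2$--orbifolds: every good $2$--orbifold admits a complete Riemannian metric of constant curvature whose universal cover is isometric to one of $S^2$, $\E^2$, or $\Hyp^2$, the model being determined by the sign of the orbifold Euler characteristic. Pull this metric back along $\pi$. In each local rotation chart the quotient map is realized as the standard projection $D \to D/G_x$, so the pullback extends smoothly across the preimages of cone points to a complete constant curvature Riemannian metric on $X$. This metric is $G$--invariant by construction, since it descends to a well-defined metric on $X/G$.

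The main obstacle is the invocation of orbifold uniformization. This is where the goodness established in the previous step is essential: the classification of $2$--orbifold geometries requires ruling out the bad examples, which is automatic here because $X$ itself is a manifold cover of $X/G$. The remaining ingredients---the local normal form for finite orientation-preserving actions coming from Proposition~\ref{prop:Rotation}, pulling back a metric along an orbifold covering, and inheriting completeness via the covering map---are standard once the orbifold setup is in place.
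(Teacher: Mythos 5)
Your outline (build an orbifold structure on $X/G$ with only cone points, observe it is good because $X$ covers it, then uniformize and pull back) is the same strategy the paper uses, and in the \emph{closed} case it matches the paper exactly, where Thurston's geometrization of closed $2$--orbifolds is invoked. The gap is in the general case. The theorem places no compactness or finite-type hypothesis on $X$, so $X/G$ may be a non-compact orbifold of infinite type (infinite genus, infinitely many ends, infinitely many cone points). The ``uniformization theorem for $2$--orbifolds'' you invoke---complete constant-curvature metric, with the model determined by the sign of the orbifold Euler characteristic---is the statement for \emph{closed} orbifolds. For open quotients the Euler-characteristic trichotomy is simply false or undefined: $\R^2$ (trivial $G$) has positive Euler characteristic but admits no complete spherical metric, only flat or hyperbolic ones, and an infinite-type quotient has no well-defined $\chi$ at all. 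So the key citation does not exist off the shelf in the generality you need, and filling it amounts to redoing what the paper does: put a complex structure on the underlying surface of $X/G$, pull it back through the ramified covering to the universal cover $\tilde{X}$, apply the classical Uniformization Theorem to get a complete constant-curvature metric on $\tilde{X}$ invariant under the deck group of $\tilde{X}\to X/G$ (which contains the lifts of $G$), and project to $X$. Your argument as written silently assumes the compact statement and never addresses completeness or the choice of model in the open case.

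Two smaller points. First, your construction of the $G_x$--invariant closed disk by ``averaging a Riemannian metric over the finite orbit'' does not work: $G_x$ acts only by homeomorphisms, which need not be smooth for any chosen smooth structure, so there is nothing to average. The paper instead quotes a topological result (Constantin--Kolev) giving a neighborhood basis of $\Stab(x)$--invariant closed disks, and properness is then used again to shrink such a disk $\Delta$ so that $\Delta\cap g\Delta\neq\emptyset$ forces $g\in\Stab(x)$; some such input is needed before Proposition~\ref{prop:Rotation} can be applied. Second, your claim that a nontrivial kernel of the $G_x$--action on the disk ``would violate properness'' is off: a finite global kernel is perfectly compatible with properness. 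What is true is that a finite-order orientation-preserving homeomorphism of a surface fixing an open set pointwise is the identity, so the local action is faithful modulo the global kernel, and one may pass to $G/\ker$ without affecting the conclusion. These are fixable, but the uniformization step in the non-compact, infinite-type case is the genuine missing ingredient.
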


By convention, all surfaces here are second countable and Hausdorff. An action is \emph{proper} if each compact set meets only finitely many of its translates.

\begin{proof}
It suffices to prove the result when $G$ acts faithfully.
Each $x \in X$ has a neighborhood basis of $\Stab(x)$--invariant closed discs (see, for instance, \cite{Kolev06}).
By properness, the quotient $X/G$ is Hausdorff and for each $x$ we may choose such an invariant disc $\Delta$ so that $\Delta \cap g\Delta$ is nonempty only when $g \in \Stab(x)$.
A theorem of Ker\'{e}\-kj\'{a}r\-t\'{o} implies that the action of $\Stab(x)$ on $\Delta$ is topologically conjugate to an orthogonal action \cite{Kolev06}.
By a theorem of Newman (see \cite{Dress_NewmanThm}), the fixed set of the finite group $\Stab(x)$ is nowhere dense, so the action of $\Stab(x)$ on $\Delta$ is faithful.
Thus, $X/G$ is a locally linear good orbifold whose underlying space is a surface.

First consider the case that $X/G$ is orientable.
Every orientable triangulated surface admits a complex structure by a construction of Ahlfors--Sario \cite[II.5E]{AhlforsSario_RiemannSurfaces}.
A complex structure on $X/G$ lifts via the branched covering to a complex structure on the universal cover $\tilde{X}$ of $X$ as in \cite[II.4B]{AhlforsSario_RiemannSurfaces}.
The existence of a geometric structure then follows from the Uniformization Theorem in the usual way.
The argument is analogous in the nonorientable case using orbifold coverings, Riemann surfaces without orientation, and the class of directly and indirectly conformal mappings.
\end{proof}

Using a recent result of \Haissinsky--Lecuire, one concludes that any group acting properly---but not necessarily faithfully---on the plane is a virtual surface group, as follows.

\begin{cor}
\label{cor:VirtuallyTorsionFree}
If $G$ is a finitely generated group acting properly on $\R^2$ then $G$ is linear and has a finite-index surface subgroup.
\end{cor}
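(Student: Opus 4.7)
The plan is to combine Theorem~\ref{thm:GeometricOrbifold} with Selberg's lemma, using the \Haissinsky--Lecuire linearity result from the paragraph preceding the statement to handle the first conclusion. First I would pass to the orientation-preserving subgroup $G^+ \le G$ of index at most $2$; the action of $G^+$ on $\R^2$ is still proper. Theorem~\ref{thm:GeometricOrbifold} then equips $\R^2$ with a complete constant-curvature metric on which $G^+$ acts by isometries. Since $\R^2$ is noncompact and simply connected, the model geometry is $\E^2$ or $\Hyp^2$, so after quotienting by the (finite, by properness) kernel of the action, $G^+$ maps onto a discrete subgroup of $\Isom(\E^2)$ or $\Isom(\Hyp^2)$, and is therefore virtually linear. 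To upgrade this to linearity of $G$ itself I would invoke the theorem of \Haissinsky--Lecuire referenced in the introductory paragraph, which is designed exactly to handle the passage through a finite kernel in the planar setting.

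With $G$ now linear and finitely generated, Selberg's lemma supplies a torsion-free subgroup $G_0 \le G$ of finite index, which after intersecting with $G^+$ we may take to be orientation-preserving. A second application of Theorem~\ref{thm:GeometricOrbifold} realizes $G_0$ as a discrete isometry subgroup of $\E^2$ or $\Hyp^2$, and torsion-freeness together with properness forces the action to be free. Hence the quotient $\Sigma = \R^2/G_0$ is a connected orientable surface with $G_0 \cong \pi_1(\Sigma)$; if $\Sigma$ is closed then $G_0$ is a closed surface group, and otherwise $\Sigma$ is noncompact, hence homotopy equivalent to a graph, so $G_0$ is a finitely generated free group. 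Either way $G_0$ is a surface group in the sense of the paper.

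The main obstacle is the first assertion: Theorem~\ref{thm:GeometricOrbifold} only yields linearity of the faithful quotient, and finite-by-linear groups are not linear in general. Bridging this gap is precisely the content of the \Haissinsky--Lecuire result, which exploits the low-dimensional topology of the plane. Without that input one would only obtain virtual linearity of $G$, which still suffices to run Selberg's argument and extract the finite-index surface subgroup, but falls short of the stated linearity of $G$ itself.
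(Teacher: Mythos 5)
Your proposal is correct and takes essentially the same route as the paper: Theorem~\ref{thm:GeometricOrbifold} plus Selberg's Lemma, with the \Haissinsky--Lecuire result (Theorem~\ref{thm:virtual}) bridging the finite kernel, just as the paper does in its non-faithful case. One caveat: your closing aside that without that input one would still obtain virtual linearity of $G$ is not right---a finite-by-(virtually linear) group need not be virtually linear, or even virtually torsion-free (see the remark before Theorem~\ref{thm:virtual} citing Erschler)---so Theorem~\ref{thm:virtual} is needed for the surface-subgroup conclusion as well as for linearity; since your main argument does invoke it, this does not affect the proof.
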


\begin{proof}
If the action of $G$ on $\R^2$ is faithful, then $G$ is a finitely generated linear group, and the result follows from Selberg's Lemma.
The general case is a consequence of the following result \cite[Thm.~1.3]{petercyril}:
Let $G$ be a group extension with finite kernel and with quotient the fundamental group of a compact surface. Then $G$ is linear and has a finite-index torsion-free subgroup.
\end{proof}

\section{Examples}
\label{sec:Examples}

This section focuses on examples illustrating two phenomena which are relevant for the hypotheses in our main theorem and in our conjecture.  The first is that there exist relatively hyperbolic group pairs $(G,\mathcal{P})$ with planar, connected boundary $M=\partial(G, \mathcal{P})$ such that $G$ is not virtually the fundamental group of a $3$--manifold.  All such known examples occur when $M$ has cut points, in other words $(G,\mathcal{P})$ admits a nontrivial peripheral splitting.

The second phenomenon of interest is that the Bowditch boundary may contain parabolic cut pairs, even when $(G, \mathcal{P})$ does not split over a $2$--ended group relative to $\mathcal{P}$. A \emph{parabolic cut pair} is a cut pair consisting of two parabolic points. In Corollary~\ref{cor:NoParabolicExact}, we show that if the boundary is connected with no cut points, then inseparable parabolic cut pairs can only exist if $G$ splits over a finite group. 

\begin{defn}[Trees of circles]
Let $M$ be a Peano continuum.  A subset $C$ is a \emph{cyclic element} if $C$ consists of a single cut point or contains a non-cutpoint $p$ and all points $q$ that are not separated from $p$ by any cut point of $M$.
Each Peano continuum is the union of its cyclic elements, and each pair of cyclic elements intersects in at most one point that is a cut point of $M$ (see Wilder \cite[\S III.3]{Wilder_Topology}).
A \emph{tree of circles} is a Peano continuum whose nontrivial cyclic elements are homeomorphic to the circle $S^1$.
Any tree of circles admits a planar embedding by a classical theorem of Ayres \cite{Ayres29}.
\end{defn}

\begin{prop}[Hide stuff in the peripheral]
\label{prop:HidingStuff}
For any finitely generated group $P$ with an infinite order element, there exists a relatively hyperbolic group pair $(G,\mathcal{P})$ with each peripheral subgroup isomorphic to $P$ and with Bowditch boundary planar and homeomorphic to a tree of circles.

In particular, if $P$ is not virtually the fundamental group of a $3$--manifold, then neither is $G$.
\end{prop}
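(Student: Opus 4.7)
The plan is to construct $G$ as an amalgamated free product of a cusped hyperbolic surface group with $P$, glued along an infinite cyclic subgroup. Fix an infinite-order element $p \in P$, and let $\Sigma$ be a complete, finite-area hyperbolic once-punctured torus with $S = \pi_1(\Sigma)$, the free group of rank two. Let $c \in S$ generate the conjugacy class of cusps, so that $(S, \{\gen{c}\})$ is relatively hyperbolic with Bowditch boundary $S^1$. I then set
\[
   G \;=\; S \, *_{\gen{c} \,=\, \gen{p}} \, P,
\]
and let $\mathcal{P}$ be the $G$-conjugacy class of $P$.

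First I verify that $(G, \mathcal{P})$ is relatively hyperbolic. The edge group is parabolic on both sides: it is the full peripheral subgroup in $(S, \{\gen{c}\})$ and is contained in the unique peripheral subgroup in $(P, \{P\})$. Dahmani's combination theorem \cite{Dahmani03Combination} then yields $(G, \mathcal{P})$ relatively hyperbolic, with $\mathcal{P}$ the conjugacy class of the merged peripheral $P$.

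The main step, which I expect to be the principal technical obstacle, is to identify $M = \partial(G, \mathcal{P})$ as a tree of circles. Dahmani's description of the combined boundary along the Bass-Serre tree $T$ of the splitting gives a decomposition in which each $S$-vertex contributes a copy of $\partial(S, \{\gen{c}\}) \cong S^1$, each $P$-vertex contributes the single point $\partial(P, \{P\})$, and adjacent vertex spaces are identified at the unique parabolic fixed point of the edge stabilizer on each side. Since the edge group is infinite, Theorem~\ref{thm:Connected} gives that $M$ is a Peano continuum. The obstacle is the translation from this tree-of-spaces picture into the cyclic-element decomposition: I must verify that removing a $P$-vertex image separates $M$ along precisely the two half-trees of $T$, that no additional cut points arise (in particular, no point interior to any $S$-vertex circle is a cut point of $M$), and hence that the cyclic elements of $M$ are exactly the $S$-vertex circles together with the $P$-vertex singletons. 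Once this is done, $M$ is a tree of circles in the sense of the definition above, and Ayres' theorem \cite{Ayres29} provides a planar embedding.

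The final sentence of the proposition follows from elementary covering space theory: $P$ embeds in $G$, so if a finite-index subgroup $G_0 \le G$ were isomorphic to $\pi_1(N)$ for a $3$--manifold $N$, then $P \cap G_0$ would have finite index in $P$ and would coincide with the fundamental group of a covering space of $N$, making $P$ virtually a $3$--manifold group, contrary to hypothesis.
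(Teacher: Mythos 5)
Your construction is essentially the paper's: the paper amalgamates the fundamental group $F$ of a once-punctured torus with $P$ along an infinite cyclic subgroup generated by the infinite-order element, takes $\mathcal{P}$ to be the conjugates of $P$, and invokes Dahmani's combination theorem for relative hyperbolicity, with planarity coming from Ayres' theorem exactly as you say. The step you flag as the principal obstacle---identifying the boundary as a tree of circles from the Bass--Serre tree picture---is precisely what the paper attributes directly to Dahmani's combination theorem (since $\boundary(F,\Z)=S^1$), so no further argument is needed there, and your covering-space argument for the final sentence is the standard one the paper leaves implicit.
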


For example, one could choose $P$ to be any group that does not coarsely embed in $\R^3$ or any incoherent group.

\begin{proof}
The proof is an elaboration of a simple idea due to Dahmani \cite[Prop.~2.1]{Dahmaniparabolic}.
Suppose $P$ contains an infinite cyclic subgroup $Q$.
Let $F$ be the fundamental group of a torus with one boundary component. Consider the free product with amalgamation $G= F *_{\Z} P$, where the copy of $\Z$ in $F$ corresponds to the boundary curve and the copy of $\Z$ in $P$ is the subgroup $Q$.
By Dahmani's combination theorem \cite{Dahmani03Combination} the group pair $(G, \mathcal{P})$ is relatively hyperbolic, where $\mathcal{P}$ is the set of all conjugates of $P$ in $G$, and the boundary $\boundary (G,\mathcal{P})$ is a tree of circles, since $\boundary (F,\Z) = S^1$.
In particular, the boundary is planar.
\end{proof} 

In the previous result, the peripheral subgroup is the obstruction to being a $3$--manifold group.  In the following result, we show that a relatively hyperbolic group can fail to be a virtual $3$--manifold group, even when all peripheral subgroups are virtually abelian.

\begin{prop}[Three slopes in the torus]
\label{prop:ThreeSlopes}
There exists a group that is hyperbolic relative to 
free abelian groups of rank two with a planar Bowditch boundary homeomorphic to a tree of circles but which is not virtually the fundamental group of any $3$--manifold.
\end{prop}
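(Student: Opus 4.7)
The plan is to construct $G$ as the fundamental group of an explicit graph of groups with a single $\Z^2$ vertex and three free-of-rank-two leaves, confirm relative hyperbolicity and the tree-of-circles boundary via Dahmani's combination theorem, and then rule out any virtual $3$--manifold structure by producing three non-parallel essential annulus slopes on the torus corresponding to the peripheral $\Z^2$.

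I would take $F$ to be the free group of rank two, regarded as the fundamental group of a once-punctured torus, so that $F$ is hyperbolic relative to the cyclic subgroup generated by the commutator of a standard basis and has Bowditch boundary $S^1$. Fix $P\isom\Z^2$, choose three primitive elements $q_1,q_2,q_3 \in P$ that are pairwise non-parallel (for instance $(1,0)$, $(0,1)$, and $(1,1)$), and set $Q_i=\gen{q_i}$. Let $G$ be the fundamental group of the graph of groups with central vertex group $P$, three leaf vertex groups $F_1,F_2,F_3$ each isomorphic to $F$, and three edges whose edge groups are identified with $Q_i \le P$ on one side and with the peripheral subgroup of $F_i$ on the other, and take $\mathcal{P}$ to be the conjugacy class of $P$ in $G$.

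Dahmani's combination theorem \cite{Dahmani03Combination} then yields that $(G,\mathcal{P})$ is relatively hyperbolic and exhibits $\boundary(G,\mathcal{P})$ as an iterated gluing of the circle boundaries $\boundary(F_i,\Z)$ along the parabolic fixed points coming from the $G$--translates of $P$. At each translate of the fixed point of $P$, three circles meet (one contributed by each $F_i$), and within each such circle further translates of the $Q_i$ produce additional branch points where the construction recurses. The cyclic elements of the resulting Peano continuum are therefore either singletons or copies of $S^1$, so the boundary is a tree of circles, and hence planar by Ayres' theorem.

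To rule out a virtual $3$--manifold structure I would argue by contradiction. Suppose $G'\le G$ has finite index and equals $\pi_1(M)$ for a compact, orientable, irreducible $3$--manifold $M$ with incompressible boundary. Then $P'=P\cap G'\isom\Z^2$ represents a $\pi_1$--injective torus $T$ in $M$, and the subgroups $Q_i'=Q_i\cap G'$ pick out three pairwise non-parallel essential simple closed curves on $T$. Each $F_i\cap G'$ is a finitely generated free subgroup of $G'$ whose only infinite peripheral intersection with $P$ is $Q_i'$; using Scott's compact core theorem one should be able to extract an essential annulus $A_i\subset M$ having a boundary component of slope $[Q_i']$ on $T$. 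Three non-parallel such slopes on a single $\pi_1$--injective torus contradict the standard JSJ constraint that essential annulus slopes on a torus are parallel to the regular fiber directions of the (at most two) Seifert-fibered pieces abutting it. The hard part will be this last step: promoting the abstract free subgroups $F_i\cap G'$ into honest essential annuli in $M$ with the correct boundary slopes demands careful use of the compact core theorem and the JSJ structure of $M$, whereas the explicit construction together with the boundary description follow routinely from Dahmani's combination theorem and the fact that $\boundary(F,\Z)=S^1$.
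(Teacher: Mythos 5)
Your construction and the boundary computation coincide with the paper's: the paper builds exactly this space (a flat torus with three once-punctured tori glued along the slopes $0,1,\infty$), takes $\mathcal{P}$ to be the conjugates of $\Z^2$, and identifies the Bowditch boundary as a tree of circles via Dahmani's combination theorem \cite{Dahmani03Combination}. Where you diverge is the proof that $G$ is not virtually a $3$--manifold group, and this is where your proposal has a genuine gap -- one you partly acknowledge. The step ``using Scott's compact core theorem one should be able to extract an essential annulus $A_i$ with boundary slope $[Q_i']$ on $T$'' does not work as stated: a free subgroup $F_i\cap G'$ meeting $P'$ in the cyclic group $Q_i'$ carries no surface in $M$ a priori, and the compact core theorem gives no annulus. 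What would actually be needed is (a) an \emph{embedded} torus carrying the specific subgroup $P'$ (the $\pi_1$--injective map of $T^2$ you get from $\Z^2\le\pi_1(M)$ is only immersed; identifying $P'$ with a boundary or JSJ torus requires invoking uniqueness of the peripheral structure or characteristic submanifold theory), and (b) the geometric realization of the algebraic splittings of $G'$ over the $Q_i'$ (relative to the peripheral tori) by embedded essential annuli, which is a substantial JSJ/characteristic-submanifold theorem, not a corollary of Scott's core. Moreover, the ``standard JSJ constraint'' you invoke is misstated: the pieces abutting $T$ need not be Seifert fibered at all, small pieces such as the twisted $I$--bundle over the Klein bottle contribute more than one annular slope, and if $T$ is an interior JSJ torus the notion of ``annulus slope on $T$'' only makes sense after cutting $M$ open along $T$. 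So the heart of the proposition -- ruling out every finite-index subgroup -- is not established by the proposal.

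For comparison, the paper avoids all of this $3$--manifold machinery. It observes that the universal cover of the glued-up complex is a $\CAT(0)$ space with isolated flats whose visual boundary contains an embedded $K_{3,3}$: the circle at infinity of a flat $\tilde T^2$ together with arcs through the three attached surfaces joining the endpoints of the slopes $a,b,c$ (the arcs exist by Ben-Zvi's path-connectedness theorem \cite{Benzvi_PathCon}). By Bestvina--Kapovich--Kleiner \cite{BestvinaKapovichKleiner02}, $G$ then admits no coarse embedding into a contractible $3$--manifold; since this obstruction is a quasi-isometry invariant and $G$ is one ended, it excludes \emph{all} finite-index subgroups at once, with no need to locate embedded tori, realize cyclic splittings by annuli, or analyze annular slopes. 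If you want to salvage your route, you would need to replace the compact-core step by the realization of relative $\Z$--splittings by essential annuli and then carry out the slope analysis piece by piece (noting that relative hyperbolicity with $\Z^2$ peripherals already forces all JSJ pieces of a putative $M$ to be hyperbolic, so in fact no essential annulus can meet $T$); but as written the argument is incomplete at its crucial step.
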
 

\begin{proof}
Let $T^2$ be a $2$--dimensional flat torus, and let $a$, $b$, and $c$ be three essential simple closed geodesics with slopes $0$, $1$, and $\infty$ with respect to a standard basis for $\Z^2$. Let $F_a$, $F_b$, and $F_c$ be three orientable hyperbolic surfaces each of genus one and each with one geodesic boundary component.
Form a locally $\CAT(0)$ space $X$ from the torus $T^2$ by gluing the boundary curve of each surface $F_a$, $F_b$ and $F_c$  to the curves $a$, $b$, and $c$ respectively. We assume that the initial metrics are chosen so that the lengths of glued curves agree.
Then the fundamental group $G=\pi_1(X)$ naturally splits as a graph of groups with four vertex groups, corresponding to the given decomposition of $X$.
The universal cover $\tilde{X}$ is a $\CAT(0)$ space with isolated flats on which $G$ acts properly, cocompactly, and isometrically.
But the visual boundary $\boundary \tilde X$ of this $\CAT(0)$ space contains an embedded copy of $K_{3,3}$.  Indeed, there is a $K_{3,3}$ consisting of a circle that is the boundary of a flat $\tilde{T}^2$ and arcs determined by the surfaces $F_a$, $F_b$, and $F_c$ which connect the endpoints of $a$, $b$, and $c$ on this circle. By the path-connectedness theorem of Ben-Zvi \cite{Benzvi_PathCon}, such paths exist in the complement of the given circle, even though the visual boundary of $\tilde{X}$ is not locally connected by \cite{MihalikRuane99}.

Therefore, $G$ does not coarsely embed in any contractible $3$--manifold by \cite{BestvinaKapovichKleiner02}.
Since $G$ is one ended, it follows that $G$ does not contain a finite-index subgroup that is the fundamental group of a $3$--manifold.

Now we let $\mathcal{P}$ be the set of all conjugates of $\pi_1(T^2)$. 
The group pair $(G,\mathcal{P})$ is relatively hyperbolic by Dahmani's combination theorem \cite{Dahmani03Combination}.
The Bowditch boundary $\boundary(G,\mathcal{P})$, which is not the same as the visual boundary $\boundary X$ by \cite{Tran13}, is again a tree of circles as above.
\end{proof}

Because the examples above have boundaries with cut points, we will mainly examine relatively hyperbolic groups whose boundaries have no cut points.
The exact cut pairs, which are the endpoints of loxodromic axes, are well understood and correspond to splittings over two-ended subgroups.  However, cut pairs where both local cut points are parabolic are not well understood.  These examples were new to us, so we include them here.  We show in Theorem~\ref{thm:nocutpairs} that in any rigid relatively hyperbolic group pair (no elementary splittings relative to $\mathcal{P}$) with boundary not a circle, the existence of a parabolic cut pair in $\partial(G, \mathcal{P})$ implies that $G$ splits over a finite group.

\begin{prop}
\label{prop:ParabolicCutPairs}
There exists a relatively hyperbolic group pair $(G, \mathcal{P})$ that is rigid in the sense that $G$ does not split over any elementary subgroup relative to $\mathcal{P}$  and such that $\partial(G, \mathcal{P})$ has parabolic cut pairs.
\end{prop}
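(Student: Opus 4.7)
The plan is to construct an explicit example.  By Corollary~\ref{cor:NoParabolicExact}, any rigid example with parabolic cut pairs in its boundary must have the underlying group $G$ fail to be one-ended as an abstract group, even though $(G,\mathcal{P})$ is necessarily one-ended relative to $\mathcal{P}$ (since rigidity excludes finite splittings relative to $\mathcal{P}$, so the boundary is connected by Theorem~\ref{thm:Connected}).  Thus $G$ must admit an abstract splitting over a finite subgroup that is \emph{not} relative to $\mathcal{P}$; equivalently, some $P \in \mathcal{P}$ must act hyperbolically on the Bass--Serre tree of that finite splitting, which in turn forces $P$ itself to be multi-ended.

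To produce such an example, I would amalgamate two one-ended hyperbolic groups $G_1$ and $G_2$ over a common finite subgroup $F$ and include in the peripheral structure a relatively quasiconvex cyclic subgroup that ``crosses'' the amalgamation.  Concretely, let $G = G_1 *_F G_2$, choose an element of the form $w = g_1 g_2$ with $g_i \in G_i \setminus F$ of infinite order, and set $P = \langle w \rangle$ with $\mathcal{P}$ its $G$--conjugacy class.  Then $G$ is hyperbolic by Bestvina--Feighn combination, and $(G,\mathcal{P})$ is relatively hyperbolic by Remark~\ref{rem:ChangingPeripheral}, since we are adding a maximal two-ended nonparabolic subgroup to the trivial peripheral structure.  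The boundary $\partial(G,\mathcal{P})$ arises from $\partial G$ by collapsing each $G$--conjugate of the loxodromic pair $\{w^{+\infty}, w^{-\infty}\}$ to a parabolic point; because $w$ crosses the splitting, each such pair already straddles the tree-of-spaces structure of $\partial G$ coming from the amalgamation, and two suitably chosen parabolic points produced this way should form a parabolic cut pair of $\partial(G,\mathcal{P})$.

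The main obstacle is verifying rigidity of $(G,\mathcal{P})$: the requirement that $G$ admits no elementary splitting relative to $\mathcal{P}$.  The amalgamation over $F$ is not relative to $\mathcal{P}$ by construction, so it does not violate rigidity.  To rule out the remaining candidates---elementary splittings over finite, two-ended, or parabolic subgroups, each relative to $\mathcal{P}$---I would appeal to the JSJ decomposition (Theorem~\ref{thm:JSJ}) and, using the assumed rigidity of $G_1$ and $G_2$ together with the crossing property of $w$, argue that any hypothetical relative elementary splitting of $G$ would force $P$ into a vertex stabilizer in a way that is incompatible with the construction.  Making this precise---including the careful choice of $G_1$, $G_2$, and $w$ so that every candidate splitting is excluded, and so that the collapsed parabolic points are not cut points but only occur in cut pairs---is the delicate point of the argument and will likely require a specialized subgroup analysis tailored to the vertex groups.
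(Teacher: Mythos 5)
Your construction breaks down at exactly the point you defer: rigidity, and it cannot be repaired by a cleverer choice of $G_1$, $G_2$, $w$. In the free-product case ($F$ trivial), with $\mathcal{P}$ the conjugates of the single cyclic group $\langle w\rangle$, $w=g_1g_2$, the pair $(G,\mathcal{P})$ is \emph{never} rigid: by associativity of amalgams, $G=G_1*G_2$ admits the splitting $G = G_1 *_{\langle g_1\rangle} \bigl(\langle g_1\rangle * G_2\bigr)$, whose edge group $\langle g_1\rangle$ is two-ended (hence elementary), whose vertex groups are proper, and which \emph{is} relative to $\mathcal{P}$ because $w=g_1g_2$ lies in the vertex group $\langle g_1\rangle * G_2$, so every conjugate of $\langle w\rangle$ is elliptic. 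This splitting exists unconditionally, no matter how rigid the factors are and no matter how $w$ ``crosses'' the decomposition, so your plan to exclude all relative elementary splittings using rigidity of $G_1,G_2$ and the crossing property of $w$ cannot succeed. If instead $F\neq 1$, the analogous splitting $G=G_1*_{E}(E*_F G_2)$ with $E=\langle g_1,F\rangle$ need no longer be elementary, but then you have verified neither rigidity nor the cut-pair claim; moreover, in the cusped space more than two peripheral cosets of $\langle w\rangle$ cross a given separating edge space, so the natural separating set at infinity is a finite set of parabolic points of size larger than two, and you would still owe an argument that an honest cut \emph{pair} survives. A smaller point: your opening reduction should invoke Theorem~\ref{thm:nocutpairs} (which also requires $M\neq S^1$) rather than Corollary~\ref{cor:NoParabolicExact}, since the corollary only excludes \emph{inseparable} parabolic cut pairs.

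For comparison, the paper gives two constructions engineered to avoid precisely this trap. Its second proof is close in spirit to yours: $G=G_1*G_2$ with $G_1,G_2$ one-ended and rigid, but the peripheral structure consists of \emph{two} conjugacy classes $A_1*A_2$ and $B_1*B_2$, where $A_i,B_i$ are distinct maximal cyclic subgroups of $G_i$; the cheap splitting over $\langle a_1\rangle$ described above is then not relative to $\mathcal{P}$, because $B_1*B_2$ fails to be elliptic in it, and the parabolic cut pair is exhibited in the cusped space, where exactly two horoballs (one for each peripheral class) are attached along each separating lift of the connecting interval. The paper's first proof (Gaboriau--Paulin: $G=\Z/3*\Z/3*\Z/3$ with peripherals the three sub-free-products $A*B$, $B*C$, $A*C$) rules out \emph{all} splittings relative to $\mathcal{P}$ outright by Serre's Lemma and exhibits the cut pair in an explicit $\CAT(-1)$ model. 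In both cases the peripheral subgroups are multi-ended free products crossing the finite splitting, chosen so that no generator can be slid across an edge to make all peripherals elliptic; a single cyclic peripheral class cannot play that role.
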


\begin{proof}[First proof]
The first example acts on a hyperbolic building and was constructed by Gaboriau--Paulin (see \cite{GaboriauPaulin01}, \S 3.4, Example~1). 
Let $G=A*B*C$ for $A=B=C=\Z/3\Z$, and let $\mathcal{P}$ be the set of all conjugates of the subgroups $A*B$, $B*C$, and $A*C$.  We note that $G$ is virtually free.  In particular, $G$ is not one ended.
On the other hand, $G$ does not split relative to $\mathcal{P}$ by Serre's Lemma: if $G$ is generated by a finite set $\{s_i\}$ and acts on a tree such that each $s_i$ and each $s_is_j$ has a fixed point, then $G$ has a fixed point \cite[\S I.6.5, Cor.~2]{Serre77}.
Gaboriau--Paulin show that $(G,\mathcal{P})$ is relatively hyperbolic.  We review their construction so that we may examine the associated Bowditch boundary.

Let $T_{3,3}$ be a bipartite tree with vertex set $\mathcal{V} \sqcup \mathcal{W}$ such that all vertices of either type have valence $3$.
Form a $\CAT(-1)$ space $X$ as follows: Start with one ideal triangle of $\Hyp^2$ for each vertex of $\mathcal{V}$ and one copy of the real line for each vertex of $\mathcal{W}$.
For each vertex of $\mathcal{V}$ identify the three sides of the corresponding ideal triangle isometrically with the three adjacent lines.
We choose these isometries so that, for each vertex of $\mathcal{W}$ corresponding to a line $\ell$, the union of the three triangles glued along $\ell$ admits an isometry of order three fixing $\ell$ pointwise that cyclically permutes the adjacent triangles.

Observe that $G=A*B*C$ acts properly and isometrically on $X$ with quotient a single ideal triangle. (The quotient object can be viewed as a complex of groups with $\Z/3\Z$ labels on the edges.) The stabilizers of the lines of $\mathcal{W}$ are the conjugates of $A$, $B$, and $C$, while each ideal triangle of $\mathcal{V}$ has trivial stabilizer.
By \cite{BowditchRelHyp}, the action of $G$ on $\boundary X$ is a geometrically finite convergence group action.
The maximal parabolic subgroups of this action are the family $\mathcal{P}$ of conjugates of $A*B$, \ $B*C$, and $A*C$.  Cutting $X$ along any line $\ell$ of $\mathcal{W}$ splits $X$ into three pieces.  Therefore the two parabolic points at the ends of $\ell$ form a cut pair in the boundary $\boundary X = \boundary(G,\mathcal{P})$. 
\end{proof}

We now discuss a different construction that is much more flexible and shows that groups with parabolic cut pairs are abundant.

\begin{proof}[Second proof]
Start with two one-ended rigid hyperbolic groups $G_1$ and $G_2$ (not splitting over a virtually cyclic subgroup).  Let $A_i$ and $B_i$ be distinct infinite cyclic subgroups of $G_i$ that are each maximal $2$--ended in $G_i$.  Let $G = G_1*G_2$ and let $\mathcal{P}$ consist of all conjugates of the subgroups $A_1 * A_2$ and $B_1 * B_2$. Then $(G,\mathcal{P})$ is relatively hyperbolic by the Bigdely--Wise combination theorem \cite{BigdelyWise_Combination}. 

We will show that its boundary has parabolic cut pairs by examining the geometry of an associated cusped space.
Let $(X_i,x_i)$ be a finite $2$--complex with basepoint such that $\pi_1(X_i,x_i)=G_i$. Glue an interval $I$ from $x_1 $ to $x_2$.   The resulting space $X$ has fundamental group $G$.
The cusped space $Y$ of Groves--Manning \cite{GrovesManning08DehnFilling} is formed from the universal cover $\tilde{X}$ by gluing combinatorial horoballs along the left cosets of $A_1*A_2$ and $B_1*B_2$.
Each lift of $I$ to $\tilde{X}$ is an interval that separates $\tilde{X}$.  But in $Y$ exactly two horoballs have been attached along this interval, one corresponding to a conjugate of $A_1*A_2$ and one corresponding to a conjugate by the same element of $B_1*B_2$. Thus the pair of parabolic points corresponding to those peripheral subgroups disconnects the boundary. (The pair is the limit set of the suspension of a conjugate of $I$ which disconnects $Y$.) 
\end{proof}


\section{A simplicial inseparable cut pair tree}
\label{sec:Peano}

This section explores properties of Peano continua without cut points and the structure of their cut pairs.  
We prove Proposition~\ref{prop:simplicialtree}, which provides a simplicial tree dual to the set of all inseparable cut pairs. 
The structure of cut pairs and other finite separating sets of a Peano continuum is studied in much greater generality in Papasoglu--Swenson \cite[Thm.~6.6]{PapasogluSwenson11_Cactus}.  A related result is also proved by Guralnik in \cite[Thm.~3.15]{Guralnik05}.
The proof here is self-contained and significantly shorter than the  proof of \cite[Thm.~6.6]{PapasogluSwenson11_Cactus}, as it is tailored to the specific case of cut pairs. 

Recall that a \emph{Peano continuum} is a compact, connected, locally connected, metrizable space. Several well-known properties of Peano continua are summarized in the following remark (see Wilder \cite{Wilder_Topology} for proofs).

\begin{rem}[Properties of Peano continua]
A Peano continuum $M$ is \emph{uniformly locally connected} in the following sense:
for each $\epsilon>0$ there exists $\delta>0$ such that any two points of $M$ with distance less than $\delta$ are contained in a connected subset of $M$ of diameter less than $\epsilon$.

In a locally connected space, the components of any open set are open.  Moreover, every connected open subset $U$ of a Peano continuum $M$ is arcwise connected.  A closed set $S\subset M$ \emph{separates} points $a$ and $b$ of $M\setminus S$ if $a$ and $b$ are in different components of $M\setminus S$.
It follows that a closed set $S$ separates $a$ from $b$ if and only if every path in $M$ from $a$ to $b$ intersects $S$. 
\end{rem}

Let $M$ be a Peano continuum.
A \emph{cut pair} is a pair of distinct points $\{x,y\}$ in $M$ such that $M\setminus\{x,y\}$ is disconnected, but neither $x$ nor $y$ is a cut point of $M$.
A cut pair $\{x,y\}$ is \emph{inseparable} if its points are not separated by any other cut pair.

The following definition was extensively studied by Bowditch \cite{Bowditch99Treelike}.

\begin{defn}[Pretrees]
Let $\mathcal{V}$ be a set with a ternary relation $\mathcal{R} \subset \mathcal{V}\times \mathcal{V} \times \mathcal{V}$. If $(A,B,C)\in \mathcal{R}$ we say that ``$B$ is between $A$ and $C$.''  If $A,B\in \mathcal{V}$, the \emph{open interval} $(A,B)$ between $A$ and $B$ is the set of all members of $\mathcal{V}$ lying between $A$ and $B$. The \emph{closed interval} $[A,B]$ is the set $(A,B) \cup \{A,B\}$.

We say that $(\mathcal{V},\mathcal{R})$ is a \emph{pretree} if it satisfies the following four conditions:
\begin{enumerate}
    \item
    \label{item:Reflexive}
    $[A,A] = \{A\}$,
    \item
    \label{item:Symmetric}
    $[A,B] = [B,A]$,
    \item
    \label{item:Monotone}
    If $B \in (A,C)$ then $C \notin (A,B)$, and
    \item
    \label{item:Triangle}
    $[A,C] \subseteq [A,B] \cup [B,C]$.
\end{enumerate}
\end{defn}

Betweenness has been studied in various settings.
The following notion of betweenness for inseparable cut pairs that are not necessarily disjoint was introduced by Papasoglu--Swenson \cite{PapasogluSwenson06} (\emph{cf.} Guralnik \cite{Guralnik05}).

\begin{defn}[Betweenness]
Let $M$ be a Peano continuum.
Let $\mathcal{V}_I$ be the set of all inseparable cut pairs of $M$.
We define a ``betweenness'' ternary relation on $\mathcal{V}_I$ as follows:
An inseparable cut pair $C$ is \emph{between} inseparable cut pairs $A$ and $B$ if the set $C$ separates at least one point of $A$ from at least one point of $B$.
We note that, by inseparability, $C$ separates all points of $A\setminus C$ from all points of $B\setminus C$.  
Equivalently $C$ is between $A$ and $B$ if and only if every path from $A\setminus C$ to $B\setminus C$ intersects $C$.
\end{defn}

\begin{lem}
For any Peano continuum $M$, the betweenness relation defined above gives the set of inseparable cut pairs $\mathcal{V}_I$ the structure of a pretree.
\end{lem}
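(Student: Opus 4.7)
The plan is to verify the four pretree axioms, relying on the elementary topological fact that for any cut pair $B=\{b_1,b_2\}$ of a Peano continuum $M$, every component $U$ of $M\setminus B$ has both $b_1$ and $b_2$ in its closure. Indeed, if $\overline{U}\setminus U$ were equal to $\{b_1\}$, then $U$ would be clopen in $M\setminus\{b_1\}$, forcing $b_1$ to be a global cut point and contradicting the defining property of a cut pair. Axiom (1) then follows directly: if $C\ne A$ lies in $[A,A]$, then $C$ separates two points of $A\setminus C$, which forces $A\cap C=\emptyset$ and contradicts the inseparability of $A$. Axiom (2) is immediate from the symmetric form of the definition of betweenness.

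The principal obstacle is axiom (3). I assume $B\in(A,C)$ and, for contradiction, that $C\in(A,B)$. Let $U_A,U_C$ be the components of $M\setminus B$ containing $A\setminus B$ and $C\setminus B$ respectively, and let $V_A,V_B$ be the components of $M\setminus C$ containing $A\setminus C$ and $B\setminus C$. A short cardinality argument---using $A\setminus B\subseteq U_A$, $C\setminus B\subseteq U_C$, and $U_A\ne U_C$---rules out $A\subseteq B\cup C$, so one can choose $a\in A\setminus(B\cup C)\subseteq U_A\cap V_A$. Pick also $b\in B\setminus C$; then $b\in V_B$ and, by the topological fact above, $b\in\overline{U_A}$, so the openness of $V_B$ in $M$ yields $U_A\cap V_B\ne\emptyset$. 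Moreover $U_A\subseteq M\setminus C$: points of $C\setminus B$ lie in $U_C\ne U_A$, while points of $C\cap B$ lie in $B$ and hence outside $U_A$. By the Mazurkiewicz--Moore theorem the open connected sets $U_A$ and $V_B$ are path-connected; concatenating a path in $U_A$ from $a$ to some point of $U_A\cap V_B$ with a path in $V_B$ to $b$ produces a path from $a$ to $b$ lying entirely in $M\setminus C$. This contradicts $a\in V_A$, $b\in V_B$, and $V_A\ne V_B$.

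For axiom (4), take $D\in[A,C]$ and reduce at once to the case in which $D$ equals none of $A$, $B$, $C$. Let $W_A,W_C$ be the components of $M\setminus D$ containing $A\setminus D$ and $C\setminus D$; by hypothesis $W_A\ne W_C$. Since $B\ne D$ gives $|B\setminus D|\ge 1$ and $B$ is inseparable, $B\setminus D$ lies in a single component $W$ of $M\setminus D$ (otherwise $D$ would separate the two points of $B$). At least one of $W\ne W_A$ or $W\ne W_C$ must hold, placing $D$ in $(A,B)$ or in $(B,C)$ respectively. The real weight of the argument is carried by the path construction in axiom (3), and the crucial ingredient there is the closure lemma, which is what bridges the component picture for $M\setminus B$ with that for $M\setminus C$.
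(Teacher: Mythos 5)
Your proof is correct and takes essentially the same approach as the paper: axiom (3) rests on the observation that the component of $M\setminus B$ containing $A\setminus B$ is disjoint from $C$ while both points of $B$ lie in its closure, and axiom (4) rests on inseparability of $B$ forcing $B\setminus D$ into a single component of $M\setminus D$. The differences are only cosmetic---you argue (3) via Mazurkiewicz--Moore paths and (4) via a direct component count, where the paper uses the connected union $U\cup(B\setminus C)$ and a path-concatenation argument, respectively.
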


\begin{proof}
Conditions (\ref{item:Reflexive}) and~(\ref{item:Symmetric}) are immediate from the definition. To see (\ref{item:Monotone}), suppose $B \in (A,C)$.  Then the connected component $U$ of $M\setminus B$ containing $A\setminus B$ is disjoint from $C$.  Note that each point of $B$ lies in the closure $\bar{U}$, since otherwise $B$ would contain a  cut point of $M$.  Therefore $U \cup (B\setminus C)$ is a connected set in the complement of $C$ that intersects both $A$ and $B$.  In particular $C \notin (A,B)$.

For (\ref{item:Triangle}), suppose $D \in (A,C)$ and let $B$ be any inseparable cut pair.
Suppose by way of contradiction that there exist paths $c_1$ from $A\setminus D$ to $B\setminus D$ and $c_2$ from $B\setminus D$ to $C\setminus D$ that both avoid $D$.
If $c_1(1) = c_2(0)$ then the concatenation $c_1 c_2$ provides a contradiction.
If not then $B\setminus D$ contains two distinct points; \emph{i.e.}, the pairs $B$ and $D$ are disjoint.  Since $B$ is inseparable there is a path $c_3$ from $c_1(1)$ to $c_2(0)$ in the complement of $D$. The concatenation $c_1 c_3 c_2$ provides the desired contradiction.
\end{proof}

As explained by Bowditch \cite{Bowditch99Treelike}, in a pretree each interval $(A,B)$ is linearly ordered by the \emph{separation order} given by $X<Y$ if $Y \in (X,B)$.

\begin{lem} \label{lem:between} 
Let $M$ be a Peano continuum, and $A,B$ be inseparable cut pairs in $M$.  Suppose $X$, $Y$, and $Z$ are three inseparable cut pairs in $(A,B)$. If $X< Y<Z$, then $Y$ is between $X$ and $Z$. 
\end{lem}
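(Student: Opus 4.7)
The plan is to derive the lemma as a formal consequence of the pretree axioms that have just been verified for $(\mathcal{V}_I, \mathcal{R})$. Unpacking the definition of the separation order, the hypothesis $X < Y$ says $Y \in (X,B)$, and $Y < Z$ says $Z \in (Y,B)$; the goal $Y \in (X,Z)$ is then a purely formal statement about a pretree with the three named elements $X,Y,Z$ and the auxiliary element $B$.

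First I would apply axiom (\ref{item:Triangle}) to the triple $(X,Z,B)$, which gives
\[
   [X,B] \subseteq [X,Z] \cup [Z,B].
\]
Since $Y \in (X,B) \subseteq [X,B]$, this forces $Y \in [X,Z]$ or $Y \in [Z,B]$. Next I would note that $Y$ is distinct from each of $X$, $Z$, and $B$: the distinctness of $X,Y,Z$ is part of the hypothesis $X<Y<Z$, and $Y \ne B$ because $Y \in (A,B)$ (together with axioms (\ref{item:Reflexive}) and (\ref{item:Symmetric}), which imply that an endpoint of an interval never lies in its interior). Hence $Y \in (X,Z)$ or $Y \in (Z,B)$, and it suffices to rule out the second possibility.

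To eliminate $Y \in (Z,B)$, I would use the hypothesis $Z \in (Y,B)$ together with axiom (\ref{item:Symmetric}) to rewrite this as $Z \in (B,Y)$. The monotonicity axiom (\ref{item:Monotone}), applied to the triple $(B,Z,Y)$, then yields $Y \notin (B,Z)$, and one more use of (\ref{item:Symmetric}) gives $Y \notin (Z,B)$, completing the contradiction and hence the proof.

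I do not expect any serious obstacle in this argument: the substantive topological content (verification that betweenness on inseparable cut pairs yields a pretree) has already been established in the preceding lemma, and what remains is a short chase through axioms (\ref{item:Symmetric}), (\ref{item:Monotone}), and (\ref{item:Triangle}). The only place to be careful is bookkeeping the three distinctness claims $Y \ne X$, $Y \ne Z$, and $Y \ne B$ so that membership in closed intervals can be upgraded to membership in open intervals.
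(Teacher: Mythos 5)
Your argument is correct, but it takes a genuinely different route from the paper. The paper proves the lemma directly from the topological definition of betweenness: since $Y<Z$ gives that $Z$ is not less than $Y$, there is a path $p$ from $Z$ to $B$ avoiding $Y$; concatenating an arbitrary path $q$ from $X$ to $Z$ with $p$ gives a path from $X$ to $B$, which must meet $Y$ because $X<Y$, and since $p$ misses $Y$ the path $q$ must meet it. You instead treat the statement as a purely order-theoretic fact about the pretree $(\mathcal{V}_I,\mathcal{R})$ established in the preceding lemma: the triangle axiom applied to $(X,Z,B)$ puts $Y$ in $[X,Z]\cup[Z,B]$, and monotonicity (antisymmetry of the separation order) excludes $Y\in(Z,B)$, so after the distinctness bookkeeping $Y\in(X,Z)$. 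Both are valid and of comparable length; your version has the advantage of showing the lemma holds in any pretree with its separation order (no topology needed beyond what went into the pretree verification), while the paper's path argument stays concrete and self-contained. Note that the exclusion ``$Z$ is not less than $Y$'' in the paper is exactly your monotonicity step, so the two proofs share that core. One small correction: the fact that an endpoint never lies in the open interval (needed for $Y\neq B$) does not follow from axioms (\ref{item:Reflexive}) and (\ref{item:Symmetric}) alone as you assert; it follows from axiom (\ref{item:Monotone}) applied with $C=B$ (if $B\in(A,B)$ then $B\notin(A,B)$), or immediately from the topological definition, since a point of $B$ cannot be separated from $B$ by $B$. With that attribution fixed, the proof is complete.
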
 

\begin{proof}
Since $X < Y < Z$, every path from $X$ to $B$ meets $Y$, and every path from $Y$ to $B$ meets $Z$.  We claim that every path from $X$ to $Z$ meets $Y$.  Indeed, since $Z$ is not less than $Y$, there is a path $p$ from $Z$ to $B$ that does not meet $Y$. Let $q$ be any path from $X$ to $Z$.  The path $q$ followed by the path $p$ is a path from $X$ to $B$.  Since $X <Y$, this path meets $Y$.  Since $p$ does not meet $Y$, the arbitrary path $q$ from $X$ to $Z$ meets $Y$.
\end{proof} 
  
We say that a sequence of cut pairs $(X_i)$ \emph{converges to a point} $x_\infty$ if every neighborhood of $x_\infty$ contains all but finitely many of the cut pairs $X_i$.
  
\begin{lem}
\label{lem:monotonic} 
Let $M$ be a Peano continuum without cut points, and let $A,B \subset M$ be inseparable cut pairs.
Suppose $X_1 < X_2 < \cdots < X_i < \cdots$ is a monotonic sequence of inseparable cut pairs in the interval $(A,B)$.
Then $\diam(X_i) \to 0$, and the sets $X_i$ have a subsequence that converges to a point.
\end{lem}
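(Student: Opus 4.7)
The plan is to enclose each cut pair $X_i$ in a nested sequence of compact continua, extract a subsequential Hausdorff limit of $(X_i)$, and derive a contradiction via a path-crossing argument if that limit consists of two distinct points. The second assertion of the lemma---that a subsequence of the $X_i$ converges to a single point---then follows immediately from $\diam(X_i)\to 0$ by compactness of $M$.

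\emph{Nested enclosures.} For each $i$, let $V_i$ denote the component of $M\setminus X_i$ containing $B\setminus X_i$ (well-defined since $B$ is inseparable, so $X_i$ does not separate $B$). Using the pretree axioms together with Lemma~\ref{lem:between}, I would verify that $V_i$ also contains $X_j\setminus X_i$ for every $j>i$: the chain $X_i<X_{i+1}<\cdots<X_j$ forces $X_i\notin(X_{i+1},X_j)$ via axiom~(\ref{item:Monotone}), and the definition of betweenness supplies a path from $X_{i+1}\setminus X_i$ to $X_j\setminus X_i$ in $M\setminus X_i$. Since $X_{i+1}$ separates $X_i\setminus X_{i+1}$ from $B\setminus X_{i+1}$, the component $V_{i+1}$ is disjoint from $X_i$ and contains $B$, giving $V_{i+1}\subseteq V_i$. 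Because $M$ has no cut points, every point of $X_i$ must lie in the closure of every component of $M\setminus X_i$---else that point would globally disconnect $M$---so each $\bar V_i=V_i\cup X_i$ is a compact continuum with frontier exactly $X_i$, and $\bar V_1\supseteq\bar V_2\supseteq\cdots$.

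\emph{Extracting the Hausdorff limit.} The hyperspace $2^M$ is compact in the Hausdorff metric, so a subsequence of $(X_i)$ Hausdorff-converges to a nonempty closed set $Z\subseteq M$ with at most two points. Assume for contradiction that $X_i\to Z=\{x,y\}$ with $x\ne y$. I would aim to construct a path $\gamma$ from some $a\in A$ to some $b\in B$ in $M$ whose image is disjoint from $\{x,y\}$: for all sufficiently large $i$, $X_i$ lies in an arbitrarily small neighborhood of $\{x,y\}$ that $\gamma$ avoids, so $\gamma\cap X_i=\emptyset$. But $X_i\in(A,B)$ separates $A$ from $B$, so every path from $a$ to $b$ meets $X_i$, yielding the desired contradiction.

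\emph{Main obstacle.} The existence of $\gamma$ reduces to showing that $\{x,y\}$ does not separate $A$ from $B$ in $M$. If $\{x,y\}$ is a cut pair of $M$ at all, I claim it must be inseparable: any cut pair $\{p,q\}$ disjoint from $\{x,y\}$ separating $x$ from $y$ determines disjoint open components $U_x,U_y$ of $M\setminus\{p,q\}$ containing $x$ and $y$ respectively, and for all large $i$ the points $x_i,y_i\in X_i$ near $x$ and $y$ lie in $U_x$ and $U_y$, so $\{p,q\}$ would separate $X_i$, contradicting the inseparability of $X_i$. Hence if $\{x,y\}$ separates $A$ from $B$ then $\{x,y\}$ is an inseparable cut pair in $(A,B)$. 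The hard part---the main obstacle of the proof---is to extract a contradiction from the position of $\{x,y\}$ in the linear order on $(A,B)$ together with its Hausdorff-proximity to the $X_i$, using path-connectedness properties of the Peano continua $\bar V_i$ that each contain $X_i$, $B$, and $\{x,y\}$ for large $i$.
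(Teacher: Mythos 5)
There is a genuine gap, and you name it yourself: the entire content of the lemma sits in the case you defer (``the hard part \dots is to extract a contradiction''). Your Hausdorff-limit reduction is fine as far as it goes --- extracting a subsequential limit $Z=\{x,y\}$, handling the easy case where an $A$--$B$ path avoids $\{x,y\}$, and observing (correctly) that if $Z$ is a cut pair separating $A$ from $B$ then $Z$ is an inseparable cut pair in $(A,B)$. But that last case is exactly the configuration the lemma must rule out, and nothing in your setup produces a contradiction there: the order position of $Z$ relative to the $X_i$ and the Hausdorff proximity are, by themselves, perfectly consistent (points of $X_i$ may accumulate onto the frontier pair $Z$ from the non-$B$ side of $M\setminus Z$ without violating anything you have established). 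So the argument as written does not prove $\diam(X_i)\to 0$; it proves it only under the unverified assumption that some $A$--$B$ path misses the limit pair.

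The missing idea is the one the paper uses, and it is metric rather than order-theoretic: argue by contradiction assuming $\diam(X_i)\geq c>0$ along a subsequence, and exploit \emph{uniform} local connectedness of the Peano continuum $M$. Concretely, the paper forms the ``slabs'' $U_i=\overrightarrow{i-1}\cap\overleftarrow{i+1}$ (intersections of closures of the forward and backward components determined by $X_{i-1}$ and $X_{i+1}$), shows via inseparability of $X_i$ that $X_i=\{x_i,y_i\}$ lies in a single connected subset of $U_i$, and then chooses $a_i\in U_i$ with $d(a_i,x_i)>c/4$ and $d(a_i,y_i)>c/4$. Any connected set containing two distinct $a_i,a_j$ must cross an intermediate cut pair and hence has diameter bounded below by $c/4$, while compactness gives a Cauchy subsequence of the $a_i$; this contradicts uniform local connectedness. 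In effect, your nested sets $\bar V_i$ contain infinitely many ``parallel'' connected sets of diameter at least $c$ accumulating in $M$, and detecting that this is impossible requires precisely this local-connectedness argument --- it is not a cosmetic finishing step, and without it (or an equivalent) your proof of the first assertion, and hence of the convergence statement that you correctly derive from it, is incomplete.
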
 

\begin{proof}
Suppose by way of contradiction that $\diam(X_i)$ does not limit to zero.  Then after passing to a subsequence, we may assume that $\diam(X_i)$ is bounded away from zero.  In this circumstance, we will show that $M$ is not locally connected, contradicting the fact that $M$ is a Peano continuum.

Let $\overrightarrow{I}$ be closure of the component of $M \setminus X_I$ which contains all the $X_j$ with $j>I$. Note that a component containing all such $X_j$ must exist, since the given sequence of cut pairs is a nested sequence. Similarly, let $\overleftarrow{I}$ be the closure of the component of $M \setminus X_I$ that contains all $X_j$ with $j<I$. Let $U_I$ be the intersection
\[
   U_I = \overrightarrow{I-1} \cap \overleftarrow{I+1},
\]
which is a closed set containing $X_I$.    We claim that 
$X_I= \{x_I,y_I\}$ is contained in a single component of $U_I$.   If not, then every connected set containing $X_I$ either meets both points of $X_{I-1}$ or meets both points of $X_{I+1}$.  But then $\{x_{I-1},x_{I+1}\}$ is a cut pair that separates $x_I$ from $y_I$, contradicting inseparability of the cut pair $X_I$.  

Suppose $\diam(X_i)$ is bounded below by a positive number $c$ as $i \to \infty$.
Let $(a_i)$ be a sequence of points in $M$ such that 
 \begin{enumerate} 
 \item $a_i \in U_i$ 
 \item $d(a_i, x_i) > c/4 $ and $d(a_i, y_i) > c/4 $
  \end{enumerate} 
Such points $a_i$ must exist once $i$ is sufficiently large, since $d(x_i,y_i) \ge d$ and there is a connected subset of $U_i$ containing $x_i$ and $y_i$.

If the compactum $M$ were locally connected then for each $\epsilon>0$ there would exist $\delta>0$ as in the definition of uniformly locally connected.
Observe that if $i\ne j$, any connected set containing both $a_i$ and $a_j$ must have diameter at least $c/4$.
Therefore for any $\epsilon < c/4$, no corresponding $\delta$ exists, since $(a_i)$ has a Cauchy subsequence.
It follows that $M$ is not locally connected.
Since $M$ is a Peano continuum, we have reached a contradiction.
\end{proof}
 
\begin{lem}
\label{lem:GlobalCutPoint}
Let $M$ be a Peano continuum, and let $A,B \subset M$ be inseparable cut pairs.  Suppose $(X_i)_{i=1}^{\infty}$ is a sequence of inseparable cut pairs contained in the interval $(A,B)$.  If the sets $X_i$ converge to a single point $x_\infty$, then $x_\infty$ is a cut point of $M$.
\end{lem}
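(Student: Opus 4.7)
The plan is to argue by contradiction: assume $x_\infty$ is not a global cut point of $M$, and exhibit a path between a point of $A$ and a point of $B$ that avoids $X_i$ for all sufficiently large $i$, contradicting the fact that each $X_i$ lies between $A$ and $B$.

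First I would choose points $a \in A$ and $b \in B$ with $a \ne x_\infty$ and $b \ne x_\infty$; this is possible because $A$ and $B$ each consist of two distinct points. Since $X_i$ converges to $x_\infty$ and neither $a$ nor $b$ equals $x_\infty$, we have $a,b \notin X_i$ for all sufficiently large $i$. By the definition of betweenness together with inseparability of $A$ and $B$, each $X_i \in (A,B)$ separates every point of $A\setminus X_i$ from every point of $B\setminus X_i$; in particular, for large $i$ every path in $M$ from $a$ to $b$ must meet $X_i$.

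Now suppose, for contradiction, that $x_\infty$ is not a global cut point of $M$. Then $M\setminus\{x_\infty\}$ is a connected open subset of the Peano continuum $M$, so by the Mazurkiewicz--Moore theorem it is path-connected. Choose a path $\gamma\colon [0,1] \to M\setminus\{x_\infty\}$ from $a$ to $b$. Its image is a compact subset of $M$ missing $x_\infty$, so there exists $\epsilon > 0$ with $\gamma([0,1]) \cap \ball{x_\infty}{\epsilon} = \emptyset$. Because the sets $X_i$ converge to $x_\infty$, eventually $X_i \subset \ball{x_\infty}{\epsilon}$, and hence $\gamma$ avoids $X_i$ for all large $i$. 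This contradicts the previous paragraph, completing the proof.

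The argument is genuinely short; there is no substantial obstacle beyond carefully tracking the hypotheses. The two mild points requiring attention are (i) guaranteeing that $A$ and $B$ contribute points distinct from $x_\infty$ (immediate from $|A|=|B|=2$) and (ii) upgrading ``$X_i$ separates at least one point of $A$ from at least one point of $B$'' to the statement that $X_i$ separates the specific chosen points $a$ and $b$, which uses inseparability of $A$ and $B$ as already noted in the definition of betweenness.
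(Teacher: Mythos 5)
Your proof is correct and uses essentially the same ingredients as the paper's: the fact (noted in the paper's definition of betweenness) that each $X_i\in(A,B)$ separates the points of $A\setminus X_i$ from those of $B\setminus X_i$, the convergence of the $X_i$ to $x_\infty$, and the Mazurkiewicz--Moore path-connectivity criterion for separation in a Peano continuum. The only difference is cosmetic: the paper argues directly, choosing auxiliary test points $u,v$ outside suitable closures (so they are automatically distinct from $x_\infty$) and showing every $u$--$v$ path contains $x_\infty$, whereas you run the contrapositive with points of $A$ and $B$ themselves; note that your chosen $a,b$ are automatically distinct, since a common point of $A$ and $B$ would have to lie in every $X_i$ and hence equal $x_\infty$.
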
 
 
\begin{proof}
Let $\overrightarrow{A}$ be the closure of the component of $M \setminus A$ containing $B\setminus A$, and let $\overleftarrow{B}$ be the closure of the component of $M\setminus B$ containing $A\setminus B$.
Choose points $u$ and $v$ in the open sets $M \setminus \overrightarrow{A}$ and $M \setminus \overleftarrow{B}$ respectively.
Then every path from $u$ to $v$ intersects both $A$ and $B$.  It follows that every path from $u$ to $v$ intersects each cut pair $X_i$ of the given sequence.
Since a path is a closed set, each path from $u$ to $v$ also must intersect the limit point $x_\infty$.
Observe that $x_\infty$ lies in the closed set $\overrightarrow{A} \cap \overleftarrow{B}$, which contains neither $u$ nor $v$.  Thus $x_\infty$ is distinct from each of $u$ and $v$.
Therefore $x_\infty$ is a cut point of $M$ separating $u$ from $v$.
\end{proof}

\begin{prop}[Discreteness]
\label{prop:Discrete}
Let $M$ be a Peano continuum without cut points. Let $A$ and $B$ be inseparable cut pairs of $M$.
Then the interval $(A,B)$ is finite.
\end{prop}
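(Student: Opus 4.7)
The plan is to argue by contradiction using the two preceding lemmas. Assume the interval $(A,B)$ is infinite. Recall that by Bowditch's observation quoted just above, $(A,B)$ is linearly ordered by the separation order $X<Y$ iff $Y \in (X,B)$. Any infinite linearly ordered set contains either an infinite strictly increasing or an infinite strictly decreasing sequence (the standard monotone subsequence argument: if there is no infinite increasing sequence, one can recursively choose a decreasing one using the fact that the ``tail above $x$'' must eventually become eventually finite). After possibly relabeling $A$ and $B$, we may therefore extract a strictly increasing sequence $X_1 < X_2 < X_3 < \cdots$ of inseparable cut pairs in $(A,B)$.

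Next, I would feed this sequence into Lemma~\ref{lem:monotonic}. Since $M$ is assumed to be a Peano continuum with no cut points, the lemma gives $\diam(X_i) \to 0$ and produces a subsequence $(X_{n_i})$ whose points converge (after passing once more to a subsequence, using compactness of $M$) to a single point $x_\infty \in M$. This step is where the ``no cut points'' hypothesis is used to force the cut pairs to shrink.

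Now apply Lemma~\ref{lem:GlobalCutPoint} to the subsequence $(X_{n_i})$, still contained in $(A,B)$ and converging to $x_\infty$: the conclusion is that $x_\infty$ is a global cut point of $M$. This directly contradicts the hypothesis that $M$ has no cut points, so the assumption that $(A,B)$ is infinite is untenable, and the proposition follows.

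Essentially all of the real work has already been done in Lemmas~\ref{lem:monotonic} and~\ref{lem:GlobalCutPoint}; the only step that requires any care is the extraction of a monotonic sequence, and even that is a standard fact about linearly ordered sets. So I would not expect any serious obstacle here: the proof is a short appeal to the two lemmas just proved.
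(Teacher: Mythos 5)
Your proposal is correct and takes essentially the same route as the paper: argue by contradiction, produce a monotone sequence of inseparable cut pairs in $(A,B)$, and then combine Lemma~\ref{lem:monotonic} (diameters shrink, subsequence converges to a point) with Lemma~\ref{lem:GlobalCutPoint} (the limit is a global cut point) to contradict the no-cut-point hypothesis. The only cosmetic difference is how the monotone sequence is extracted---you invoke the standard monotone-subsequence fact for infinite linear orders, while the paper runs a bisection argument using Lemma~\ref{lem:between}---and either works.
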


\begin{proof}
Suppose by way of contradiction that the interval $(A,B)$ contains infinitely many inseparable cut pairs.
By a bisection argument, we will show that there is a monotonic sequence with respect to the order $<$ (or its reverse order obtained by switching the roles of $A$ and $B$).
Choose a cut pair $X_1$ from the interval $(A,B)$.
By Lemma~\ref{lem:between},  one of the intervals $(A,X_1)$ or $(X_1,B)$ also contains infinitely many inseparable cut pairs.  Let $(A_1,B_1)$ denote this new interval.
Continuing recursively, we produce an infinite sequence of distinct intervals $(A_i,B_i)$ each containing infinitely many inseparable cut pairs.
Furthermore these intervals are nested in the sense that
\[
   A_1 \le A_2 \le \cdots \le A_i \le \cdots
\]
and 
\[
   B_1 \ge B_2 \ge \cdots \ge B_i \ge \cdots
\]
Since the intervals $(A_i,B_i)$ for $i=1,2,3,\dots$ are pairwise distinct, there are either infinitely many distinct left endpoints or infinitely many distinct right endpoints.
In either case (possibly by switching the roles of $A$ and $B$) there exists a monotonic sequence of inseparable cut pairs contained in the original interval $(A,B)$. By Lemma~\ref{lem:monotonic}, these cut pairs converge to a single point, which must be a cut point by Lemma~\ref{lem:GlobalCutPoint}, contradicting the assumption that $M$ has no cut points.  Therefore all intervals $(A,B)$ are finite.
\end{proof}

Let $M$ be a Peano continuum without cut points.  
Recall that $\mathcal{V}_I$ is the set of all inseparable cut pairs in $M$.
A \emph{star} is a maximal subset $S\subseteq\mathcal{V}_I$ with the property that for each $A,B \in S$ the interval $(A,B)$ is empty.  Let $\mathcal{W}$ be the set of all stars.
We define a bipartite graph $\mathcal{T}_M$ with vertex set $\mathcal{V}_I \sqcup \mathcal{W}$ such that
two vertices $V \in \mathcal{V}_I$ and $W \in \mathcal{W}$ are joined by an edge in $\mathcal{T}_M$ whenever $V \in W$.

\begin{prop}[Inseparable cut pair tree]
\label{prop:simplicialtree}
Let $M$ be a Peano continuum without cut points.
Then the graph $\mathcal{T}_M$ defined above, the \emph{inseparable cut pair tree}, is a simplicial tree.
\end{prop}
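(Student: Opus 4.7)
The plan is to verify that $\mathcal{T}_M$ is bipartite, connected, and acyclic; bipartiteness is built into the definition of $\mathcal{T}_M$.

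For connectedness, given any $A, B \in \mathcal{V}_I$, Proposition~\ref{prop:Discrete} ensures that $[A,B]$ is finite, and the separation order arranges it as $A = X_0 < X_1 < \cdots < X_n = B$. I would prove that each consecutive pair $X_i, X_{i+1}$ has empty interval in all of $\mathcal{V}_I$, so they lie in a common star by Zorn's lemma. If $Y \in (X_i, X_{i+1})$, then axiom~(\ref{item:Triangle}) applied to $X_i, A, X_{i+1}$ gives $[X_i, X_{i+1}] \subseteq [X_i, A] \cup [A, X_{i+1}]$, placing $Y$ in $[A, X_i] \cup [A, X_{i+1}] \subseteq [A, B]$, where the last inclusion uses the standard pretree fact that $[A, C] \subseteq [A, B]$ whenever $C \in [A, B]$ (a short consequence of axioms~(\ref{item:Monotone}) and~(\ref{item:Triangle})). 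Axiom~(\ref{item:Monotone}) then rules out $Y = A$ or $Y = B$, so $Y \in (A, B)$, and Lemma~\ref{lem:between} together with axiom~(\ref{item:Monotone}) forces $Y$ to sit strictly between $X_i$ and $X_{i+1}$ in the separation order, contradicting adjacency. Concatenating the resulting length-two paths $X_i - W - X_{i+1}$ through any common star produces a path in $\mathcal{T}_M$ from $A$ to $B$.

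The key step for acyclicity is a lemma that two distinct stars share at most one common vertex. Suppose $V_1, V_2 \in W \cap W'$ with $W \ne W'$, and pick $V_3 \in W \setminus W'$. Maximality of $W'$ forces $(V_3, V_4) \ne \emptyset$ for some $V_4 \in W'$; since $V_3$ has empty interval with every element of $W$, this witness $V_4$ must actually lie in $W' \setminus W$. For any $Y \in (V_3, V_4)$, axiom~(\ref{item:Triangle}) with $V_1$ as interpolant yields
\[
   [V_3, V_4] \subseteq [V_3, V_1] \cup [V_1, V_4] = \{V_1, V_3, V_4\},
\]
forcing $Y = V_1$; replacing $V_1$ by $V_2$ as the interpolant and repeating the argument gives $Y = V_2$, whence $V_1 = V_2$, a contradiction. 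I expect this uniqueness-of-common-vertex lemma to be the main technical obstacle in the proof, since the rest of the acyclicity argument follows cleanly from it.

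With the uniqueness lemma in hand, $4$-cycles are immediately ruled out. To exclude cycles of length $2k \ge 6$, consider a shortest such cycle $V_1 - W_1 - V_2 - W_2 - \cdots - V_k - W_k - V_1$. For each consecutive triple $V_i, V_{i+1}, V_{i+2}$, axiom~(\ref{item:Triangle}) gives $(V_i, V_{i+2}) \subseteq \{V_{i+1}\}$. If $(V_i, V_{i+2}) = \emptyset$ for some $i$, then $\{V_i, V_{i+1}, V_{i+2}\}$ lies in some maximal star $W^*$: either $W^* = W_i$, in which case $W_i \cap W_{i+1} \supseteq \{V_{i+1}, V_{i+2}\}$ forces $W_i = W_{i+1}$; or $W^* \ne W_i$ and then $W^* \cap W_i \supseteq \{V_i, V_{i+1}\}$ contradicts the uniqueness lemma. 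Both outcomes are impossible. Otherwise $(V_i, V_{i+2}) = \{V_{i+1}\}$ for every triple, and an induction using axiom~(\ref{item:Triangle}) with interpolant $V_{j-1}$, together with $[V_{j-1}, V_j] = \{V_{j-1}, V_j\}$, shows $V_2 \in (V_1, V_j)$ for all $3 \le j \le k$; in particular $V_2 \in (V_1, V_k)$. But $V_1, V_k \in W_k$ gives $(V_1, V_k) = \emptyset$, the final contradiction.
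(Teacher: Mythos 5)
Your proof is correct, but it takes a genuinely different route from the paper's. The paper disposes of this proposition in a few lines: Proposition~\ref{prop:Discrete} shows the betweenness relation makes the set of inseparable cut pairs a \emph{discrete} pretree, and then Bowditch's general result \cite[Lem.~3.1]{Bowditch_Peripheral} is invoked, which says exactly that the star construction completes any discrete pretree to a simplicial tree. You instead reprove that general fact directly in this setting: discreteness (via Proposition~\ref{prop:Discrete}), the separation order, and Lemma~\ref{lem:between} give connectedness by showing consecutive elements of a finite interval lie in a common star, while your lemma that two distinct stars share at most one vertex, combined with the interval computations from axiom~(\ref{item:Triangle}), rules out all cycles. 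I checked the steps and they go through; the only blemish is a labeling slip in the final induction, where the application you need is $[V_1,V_{j-1}]\subseteq[V_1,V_j]\cup[V_j,V_{j-1}]$, i.e.\ with $V_j$ (not $V_{j-1}$) as the interpolant --- the ingredient you cite, $[V_{j-1},V_j]=\{V_{j-1},V_j\}$, is precisely what makes that application work, so the fix is immediate. (Similarly, ruling out $Y=A$ or $Y=B$ in the connectedness step takes one more application of axiom~(\ref{item:Triangle}) before axiom~(\ref{item:Monotone}) applies, but this is routine.) What the paper's route buys is brevity and a pointer to the general pretree machinery; what yours buys is a self-contained argument that makes visible exactly where the no-cut-point hypothesis enters (only through discreteness of intervals, and only for connectedness), at the cost of redoing pretree bookkeeping that Bowditch has already carried out in general.
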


\begin{proof}
By Proposition~\ref{prop:Discrete}, the set of inseparable cut pairs is a discrete pretree in the sense of Bowditch.  In \cite[\S 3]{Bowditch_Peripheral}, Bowditch shows that the above construction produces a simplicial tree when applied to any discrete pretree.
\end{proof}

\section{Inseparable cut pairs are loxodromic} \label{sec:inseplox} 

This section examines the case of a Peano continuum without cut points that arises as a Bowditch boundary of a relatively hyperbolic group.  In this setting, the existence of a convergence group action allows us to establish stronger properties of the inseparable cut pair tree.

The following proposition is the main result of this section and will be used in the proof of the stronger theorem asserting no cut pairs, which is Theorem~\ref{thm:nocutpairs} below. 

\begin{prop}
\label{prop:inseparable} 
Let $G$ be one ended. Suppose $(G,\mathcal{P})$ is relatively hyperbolic and $M=\boundary (G,\mathcal{P})$ is not homeomorphic to the circle $S^1$. 
Assume $(G,\mathcal{P})$ has no elementary splittings relative to $\mathcal{P}$.
Then $M$ is a Peano continuum without cut points and without inseparable cut pairs, such that all local cut points are parabolic.
\end{prop}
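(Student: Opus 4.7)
The plan is to derive the proposition as an essentially immediate combination of Proposition~\ref{prop:NoSplitting} and Corollary~\ref{cor:NoParabolicExact}, using the observation that the points of any inseparable cut pair are automatically local cut points which are not global cut points.

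First I would invoke Proposition~\ref{prop:NoSplitting}. Its hypotheses match ours: $(G,\mathcal{P})$ is relatively hyperbolic with $M \ne S^1$ and admits no elementary splittings relative to $\mathcal{P}$. This immediately yields that $M$ is a Peano continuum with no global cut points and that every local cut point of $M$ is a parabolic point. (Note that the one-ended hypothesis is not needed here, but it is compatible with the no-splitting assumption via Theorem~\ref{thm:Connected}.) That already establishes two of the three conclusions.

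It remains to show $M$ has no inseparable cut pairs. Suppose, toward a contradiction, that $\{x,y\}$ is an inseparable cut pair of $M$. By definition of a cut pair, neither $x$ nor $y$ is a global cut point of $M$, while $M\setminus\{x,y\}$ is disconnected. In particular, each of $x$ and $y$ is a local cut point of $M$ that is not a global cut point. Applying the conclusion of the previous paragraph, both $x$ and $y$ must therefore be parabolic points of the convergence action of $G$ on $M$. Thus $\{x,y\}$ is an inseparable parabolic cut pair.

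Now I would apply Corollary~\ref{cor:NoParabolicExact}. Its hypotheses are precisely that $(G,\mathcal{P})$ is relatively hyperbolic, $M = \boundary(G,\mathcal{P})$ is connected with no cut points (supplied by Proposition~\ref{prop:NoSplitting} above), and $G$ is one ended (by assumption). The corollary asserts that all inseparable cut pairs are loxodromic, hence in particular contain no parabolic points. This contradicts the existence of the inseparable parabolic cut pair $\{x,y\}$, completing the proof. There is no real obstacle here; the work has been done in the earlier sections, and the proposition is essentially a bookkeeping step combining the characterization of local cut points as parabolic with the nonexistence of inseparable parabolic cut pairs in the one-ended setting.
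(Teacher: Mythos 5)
Your proof is correct, and it reaches the conclusion by a slightly different final step than the paper. Both arguments begin with Proposition~\ref{prop:NoSplitting}, and both ultimately rest on Proposition~\ref{prop:InseparableExact} (of which Corollary~\ref{cor:NoParabolicExact} is the relevant restatement). The paper finishes by noting that any inseparable cut pair would be \emph{exact} (being loxodromic), and then invokes the Haulmark--Hruska correspondence between exact inseparable cut pairs and splittings over $2$--ended subgroups, which the rigidity hypothesis forbids. You instead reuse Haulmark's theorem, already packaged in Proposition~\ref{prop:NoSplitting}: the two points of an inseparable cut pair are local cut points that are not global cut points, hence parabolic, so the pair would be an inseparable parabolic cut pair, contradicting Corollary~\ref{cor:NoParabolicExact}. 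This is exactly the inference the paper itself makes later, in Lemma~\ref{lem:MZ:NoCutPairs}, so it is sound; the only step worth making explicit is why a point of a cut pair is a local cut point (since $M$ has no cut points, each point of the pair lies in the closure of every component of $M\setminus\{x,y\}$, so its valence is at least the number of such components, which is at least two). What your route buys is economy: the no-elementary-splitting hypothesis enters only once, through Proposition~\ref{prop:NoSplitting}, and you avoid citing the splitting criterion for exact cut pairs a second time. What the paper's route buys is independence from the local-cut-point classification: its cut-pair argument needs only Proposition~\ref{prop:InseparableExact} together with the Haulmark--Hruska result, not the parabolicity of local cut points.
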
 

The proof involves combining the following three lemmas.

\begin{lem}
\label{lem:NoSplitting} 
Suppose $(G,\mathcal{P})$ is relatively hyperbolic with boundary $M=\boundary (G,\mathcal{P})$ not homeomorphic to the circle $S^1$. 
Assume $(G,\mathcal{P})$ has no elementary splittings relative to $\mathcal{P}$.
Then $M$ is a Peano continuum without cut points such that all local cut points are parabolic.
\end{lem} 

\begin{proof}
The conclusion that $M$ is a Peano continuum with no cut points is the conclusion of Theorem~\ref{thm:Connected}.
Furthermore, since $M\ne S^1$, all local cut points of $M$ are parabolic by a theorem of Haulmark \cite{HaulmarkRelHyp}.
\end{proof}

Recall the inseparable cut pair tree for $T_M$, defined just above Proposition~\ref{prop:simplicialtree}. 

\begin{lem}
\label{lem:MinimalTree}
Let $(G,\mathcal{P})$ be relatively hyperbolic, and suppose $M=\boundary(M,\mathcal{P})$ is a Peano continuum without cut points.
Then $G$ acts minimally on the inseparable cut pair tree $\mathcal{T}_M$.
\end{lem}

\begin{proof}
We first show that $\mathcal{T}_M$ does not contain vertices of valence one. Recall that $\mathcal{T}_M$ is bipartite with vertex set $\mathcal{V}_I \sqcup \mathcal{W}$.  We first consider the valence of an inseparable-cut-pair vertex $A \in \mathcal{V}_I$.  
The convergence action of $G$ on $M$ is \emph{minimal} in the sense that $M$ does not contain a nonempty $G$--invariant closed proper subset.
Since each component of $M \setminus A$ is open in $M$, each component $U$ contains an inseparable cut pair (even in the orbit of $A$).  
By Proposition~\ref{prop:Discrete}, for each $U$ there is a cut pair $B\subset U$, such that the interval $(A,B)$ is empty.  By Zorn's Lemma, the set $\{A,B\}$ is a subset of a star, \emph{i.e.}, a maximal set of cut pairs. This star is a $\mathcal{W}$--vertex adjacent to the $\mathcal{V}_I$--vertex $A$.  
Indeed, neighbors of $A$ are in one-to-one correspondence with the components of $M \setminus A$. 
Therefore the inseparable-cut-pair vertex $A \in \mathcal{V}_I$ does not have valence one.

The argument in the preceding paragraph implies that each star contains more than one inseparable cut pair. So a star vertex $W \in \mathcal{W}$ also cannot have valence one. 

Now we claim that $G$ acts minimally on $\mathcal{T}_M$. Suppose by way of contradiction that $G$ stabilizes a proper subtree $T'$.  Then there is an edge that separates $T'$ from its complement.  Since there are no valence one vertices, there are vertices of type $\mathcal{V}_I$ on either side of this edge. This edge goes between an inseparable cut pair $A$ an a star $S$ which contains it.  Since the orbit of $A$ contains cut pairs in each component of $M \setminus A$, (since each of these components is open in $M$ and the orbit is dense) neither of the pieces cut off by this edge are $G$--invariant. 
\end{proof}

\begin{lem}
\label{lem:InseparableExact}
Suppose $(G,\mathcal{P})$ is relatively hyperbolic and $M=\boundary(G,\mathcal{P})$ is a Peano continuum without cut points.
If $G$ is one ended, then each inseparable cut pair of $M$ consists of the endpoints of a loxodromic element.  In particular, all inseparable cut pairs are exact.
\end{lem}

\begin{proof}
By Lemma~\ref{lem:MinimalTree}, the action of $G$ on the inseparable cut pair tree $\mathcal{T}_M$ is minimal.
Since $G$ is one ended, it follows from Stallings' Theorem that every edge of $\mathcal{T}_M$ has an infinite stabilizer.  
In particular, each inseparable cut pair $C$ has an infinite stabilizer $H$.
Without loss of generality, assume each point of $C$ is fixed by $H$ (passing to an index two subgroup if necessary).
Every fixed point of $H$ is contained in the limit set $\Lambda(H)$, so that $C \subseteq \Lambda(H)$.
By a result of Tukia \cite[Thm.~2S]{Tukia94}, if a subgroup $H$ of a convergence group has at least one fixed point then $\Lambda(H)$ contains at most two points.  Therefore $C=\Lambda(H)$. By Tukia \cite[Thm.~2R]{Tukia94}, the two points of $C$ are the fixed points of a loxodromic element of $H$, establishing the claim.

That loxodromic cut pairs are exact follows from Haulmark--Hruska \cite[Lem.~4.1]{HaulmarkHruska_JSJ}, which is a minor variation of Bowditch \cite[Lem.~5.6]{Bowditch98JSJ}. 
\end{proof}

\begin{cor}
\label{cor:NoParabolicExact}
Suppose $(G,\mathcal{P})$ is relatively hyperbolic and $M=\boundary(G,\mathcal{P})$ is a Peano continuum without cut points.
If $G$ is one ended, then there are no inseparable parabolic cut pairs. 
\end{cor} 

The proof of Proposition~\ref{prop:inseparable} now follows by combining the lemmas above.

\begin{proof}[Proof of Proposition~\ref{prop:inseparable}]
We have shown in Lemma~\ref{lem:NoSplitting} that $M$ has no cut points and all local cut points are parabolic.
Since $T_M$ is minimal by Lemma~\ref{lem:MinimalTree}, the existence of an inseparable cut pair will imply that $T_M$ is nontrivial.  By Lemma~\ref{lem:InseparableExact}, all inseparable cut pairs of $M$ are loxodromic. 
Then $(G, \mathcal{P})$ splits over a $2$--ended group since it acts on a simplicial tree with loxodromic edge stabilizers. 
Thus there are no inseparable cut pairs.
\end{proof}

\section{One-ended rigid groups have no cut pairs} 
\label{sec:ParabolicCutPairs}

The main purpose of this section is to prove the following theorem. 

\begin{thm}
\label{thm:nocutpairs}
Suppose $G$ is one ended and $(G, \mathcal{P})$ is relatively hyperbolic with no elementary splittings relative to $\mathcal{P}$. 
If $M=\boundary(G,\mathcal{P})$ is not homeomorphic to the circle $S^1$,
then $M$ is a Peano continuum with no cut points and no cut pairs.
\end{thm}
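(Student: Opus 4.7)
The previous Proposition~\ref{prop:inseparable} already carries out most of the work. Under the current hypotheses it gives that $M$ is a Peano continuum with no cut points and no inseparable cut pairs, and that every local cut point of $M$ is parabolic. Moreover, since $(G,\mathcal{P})$ admits no elementary splittings, the JSJ decomposition of Theorem~\ref{thm:JSJ} is trivial, $G$ itself is the unique rigid vertex group, and its limit set $M$ is not separated by any exact cut pair. The only task remaining is therefore to rule out any separable, non-exact cut pair in $M$.

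The plan is to prove the purely topological statement that, for a Peano continuum $M$ arising in this setting, the existence of \emph{any} cut pair implies the existence of an inseparable cut pair, which then contradicts Proposition~\ref{prop:inseparable}. Suppose $\{x,y\}$ is a cut pair in $M$. Consider the family $\mathcal{F}$ of cut pairs separating $x$ from $y$, partially ordered by nesting toward $x$: $\{a,b\}\preceq \{a',b'\}$ when the component of $M\setminus\{a',b'\}$ containing $x$ is contained in the component of $M\setminus\{a,b\}$ containing $x$. Since $\{x,y\}$ is separable, $\mathcal{F}$ is nonempty.

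Next I apply Zorn's Lemma to produce a maximal element. Given an ascending chain in $\mathcal{F}$, the corresponding pairs in $M\times M$ admit a convergent subsequence; let $\{a_\infty,b_\infty\}$ denote the limit pair. Uniform local connectedness of $M$ rules out $a_\infty=b_\infty$, as in Lemma~\ref{lem:monotonic}: a collapsed limit would witness a failure of uniform local connectedness at that point. Furthermore each path from $x$ to $y$ meets every $\{a_n,b_n\}$, and a compactness argument on the path forces it to meet the closed set $\{a_\infty,b_\infty\}$ as well, so $\{a_\infty,b_\infty\}$ is itself a cut pair separating $x$ from $y$ and bounds the chain from above.

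It remains to verify that a maximal element $\{a,b\}\in\mathcal{F}$ must be inseparable. Suppose some cut pair $\{c,d\}$ separated $a$ from $b$. Using the fact that, for each component $U$ of $M\setminus\{a,b\}$, the closure $\bar{U}=U\cup\{a,b\}$ is a continuum meeting both $a$ and $b$, one shows by connectivity that each such component $U$ must contain at least one of $c,d$: otherwise $U$ would lie in a single component of $M\setminus\{c,d\}$, forcing $a$ and $b$ into that same component. If $M\setminus\{a,b\}$ has three or more components this already fails since $\{c,d\}$ has only two elements, so in fact $\{a,b\}$ is then automatically inseparable. In the remaining case of exactly two components, a case analysis tracking which components of $M\setminus\{c,d\}$ contain $x$ and $y$ produces a cut pair in $\mathcal{F}$ strictly above $\{a,b\}$ in $\preceq$, violating maximality. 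Either outcome yields an inseparable cut pair, contradicting Proposition~\ref{prop:inseparable}. The principal obstacle is this final case analysis, and in particular verifying the component-tracking and the strict monotonicity with respect to $\preceq$; the key technical input is the structure of components of complements of cut pairs in Peano continua without global cut points developed in Section~\ref{sec:Peano}.
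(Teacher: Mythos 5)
Your reduction at the top level coincides with the paper's: Proposition~\ref{prop:inseparable} removes cut points and inseparable cut pairs, so everything hinges on the purely topological claim that a cut pair in $M$ forces an inseparable cut pair (this is the paper's Proposition~\ref{prop:SeparableInseparable}). But your proof of that claim has a genuine gap, and there is a quick sanity check that it must: your topological argument never uses the hypothesis $M \ne S^1$, while on the circle every pair of points is a separable cut pair and there are no inseparable cut pairs at all. The paper proves the claim through necklaces and gaps (Lemmas~\ref{lem:necklace}, \ref{lem:gap} and~\ref{lem:gaptopair}, built on Papasoglu--Swenson), and the hypothesis $M \ne S^1$ enters exactly at Lemma~\ref{lem:gap}.

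The concrete failure is the upper-bound step of your Zorn argument. For a chain of separable cut pairs nested toward $x$, the limit pair can degenerate to a single point, and uniform local connectedness does not prevent this; Lemma~\ref{lem:monotonic} in fact asserts the opposite of what you attribute to it (the diameters of a monotone sequence of cut pairs tend to $0$), and the non-degeneracy ingredient in its proof---that each pair lies in a single component of the slab between its neighbors---uses inseparability of the pairs in the sequence, which is exactly what you lack. When the chain collapses, the limit point is forced to be $x$ itself (all pairs lie in the closures of the shrinking $x$--components, which exclude $y$), so the argument of Lemma~\ref{lem:GlobalCutPoint} gives no contradiction: $x$ is merely a local cut point, not a global one. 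This is not a removable technicality: already for nested pairs straddling $x$ along an arc (on $S^1$, or along one edge of a theta graph, which is a Peano continuum without cut points and not a circle), the family $\mathcal{F}$ has chains with no upper bound and in fact no maximal element at all, even though the theta graph does contain an inseparable cut pair. So the maximality scheme cannot locate the inseparable pair; a different mechanism is needed---in the paper, the separable pair lies in a necklace, a necklace of a non-circle has a gap, and the closure of a gap meets the necklace in the desired inseparable pair. (Secondarily, the two-component case of your ``maximal implies inseparable'' step is asserted rather than carried out, and Zorn's Lemma requires upper bounds for arbitrary chains, not just for sequences.)
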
 

Since there are no inseparable cut pairs in this situation by Proposition~\ref{prop:inseparable}, we will study the separable cut pairs, which we will see have a natural cyclic order. To describe this cyclic order, we introduce the notion of a cyclic decomposition and a cyclic set.
The structure of cyclic sets discussed in this section closely follows work of Papasoglu--Swenson \cite{PapasogluSwenson06} from the more general setting of continua that are not necessarily locally connected. 
We get slightly stronger conclusions here in the presence of local connectedness.

\begin{defn}
A finite set of local cut points $S_0=\{ s_1,\dots,s_n \}$ with $n\ge 3$ is \emph{cyclic} if there exist closed connected subsets  $M_1,\dots,M_n$ of $M$ such that $M = \bigcup_i M_i$ and $M_i \cap M_{i+1} = s_i$, \ $M_n \cap M_1 = s_n$ and $M_i \cap M_j = \emptyset$ otherwise.
Such a family of sets $M_1,\dots,M_n$ is a \emph{cyclic decomposition} corresponding to the cyclic set $S_0$.
\end{defn} 

\begin{defn}
A \emph{necklace}  is a maximal set $N$ with $|N|>2$ such that every finite subset with more than one point is either a cut pair or a cyclic set. 
\end{defn} 

\begin{defn}
Let $N$ be a necklace.  We define an equivalence class on $M \setminus N$ such that $x \sim_N y$ if $x$ is not separated from $y$ by any cut pair contained in $N$. A $\sim_N$--equivalence class is called a \emph{gap} of $N$.
\end{defn} 

\begin{prop}
\label{prop:SeparableInseparable}
Suppose $(G,\mathcal{P})$ is relatively hyperbolic and $M=\boundary(G,\mathcal{P})$ is a Peano continuum with no cut points that is not homeomorphic to the circle $S^1$.
If $M$ has a cut pair, then $M$ contains an inseparable cut pair.
\end{prop}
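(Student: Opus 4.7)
The plan is to obtain the conclusion essentially as a direct chaining of Lemmas~\ref{lem:necklace}, \ref{lem:gap}, and~\ref{lem:gaptopair}, which together form a ``separable $\Rightarrow$ necklace $\Rightarrow$ gap $\Rightarrow$ inseparable'' pipeline. Suppose $M$ has some cut pair $C_0 = \{x,y\}$. If $C_0$ is already inseparable there is nothing to prove, so I would immediately reduce to the case where $C_0$ is separable, which is the only substantive case.

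Given such a separable cut pair, I would first invoke Lemma~\ref{lem:necklace}, whose hypotheses (Peano continuum without cut points, not homeomorphic to $S^1$) are all included in the statement of the proposition, to conclude that $C_0$ is contained in some necklace $N \subseteq M$. Second, I would apply Lemma~\ref{lem:gap} to $N$, using again that $M$ is a Peano continuum with no cut points and $M \not\cong S^1$, to produce a gap $G$ of the necklace $N$. Third, I would apply Lemma~\ref{lem:gaptopair} to this gap $G$ to conclude that $\bar{G} \cap N$ is an inseparable cut pair of $M$, which is exactly what we needed to exhibit.

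I do not expect a serious obstacle in executing this plan: the three preceding lemmas have already been proved and the proposition is essentially a bookkeeping exercise that strings them together. The only things to verify carefully are that each lemma's hypotheses are genuinely available, which they are because the relatively hyperbolic hypothesis is used only via Theorem~\ref{thm:Connected} to know that $M$ is metrizable, compact, connected, and locally connected, and the remaining assumptions (no cut points, not a circle) match the three lemmas verbatim. In particular, no further information about the group action on $M$ beyond what yields ``Peano continuum with no cut points'' is needed for this particular proposition; the work is entirely topological.
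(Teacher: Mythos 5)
Your proposal is correct and follows the paper's own proof essentially verbatim: a separable cut pair lies in a necklace by Lemma~\ref{lem:necklace}, every necklace has a gap by Lemma~\ref{lem:gap} since $M \not\cong S^1$, and Lemma~\ref{lem:gaptopair} converts the gap into an inseparable cut pair. Your observation that the argument is purely topological and needs nothing from the group action beyond the stated Peano/no-cut-point hypotheses matches the paper as well.
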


The proposition is an immediate consequence of the following lemma.

\begin{lem}
Let $M$ be a Peano continuum without cut points that is not homeomorphic to the circle $S^1$.
Then the following hold.
\begin{enumerate}
    \item Every separable cut pair of $M$ is in a necklace.
    \item Every necklace in $M$ has a gap.
    \item If $G$ is a gap of a necklace $S$ in $M$, then $\overline{G}\cap S$ is an inseparable cut pair.
\end{enumerate}
\end{lem}

\begin{proof}
The first assertion follows from Papasoglu--Swenson \cite[Lems.\ 15 and~17]{PapasogluSwenson06} using Zorn's lemma, as explained in \cite[p.~1769]{PapasogluSwenson06}.
To see the second assertion, observe that a necklace $S$ with no gaps would contain no inseparable cut pairs.
Furthermore, we would then have $M=S$.  By Papasoglu--Swenson \cite[Cor.~21]{PapasogluSwenson06}, it follows that $M$ is homeomorphic to $S^1$.

We now consider the third assertion.  Consider a gap $G$ of the necklace $S$.
Choose a cyclic decomposition $M_1,M_2,M_3$ corresponding to three points $s_1,s_2,s_3$ in $S$.   Label the points so that $G \subseteq M_2$.
Let $X=M\setminus\{s_3\}$.  Form a new space $\hat{M}$ which is a two point compactification of  $X$ by adjoining two new points $a$ and $b$ such that $a$ compactifies $M_1 \setminus\{s_3\} $ and $b$ compactifies $M_3\setminus\{s_3\}$.
Note that every point of $S\setminus\{s_3\}$ is a cut point of $\hat{M}$.
Two points of $\hat{M}$ are separated by a point $s$ of $S\setminus\{s_3\}$ if and only if they are separated in $M$ by the cut pair $\{s,s_3\}$.
Therefore the gap $G$ is an equivalence class of points of $\hat{M}$ not separated by any cut point of $\hat M$ that is also an element of $S\setminus\{s_3\}$.

Note that $\hat{M}$ does not contain an embedded arc that intersects $\bar{G}$ only in its endpoints.  Indeed, if there were such an arc $A$, it contains a point $x$ that is separated from $\hat{M}$ by a point of $S\setminus\{s_3\}$.  All paths from $x$ to $\hat{M}$ must pass through this cut point, contradicting that $A$ is an embedded arc.
Therefore $S\setminus \{s_3\}$ contains unique points $a'$ and $b'$ such that every path from $a$ to $\bar{G}$ enters $\bar{G}$ at the point $a'$ and similarly every path from $b$ to $\bar{G}$ enters $\bar{G}$ at $b'$.  Note that $a'\ne b'$, since if they were equal they would give a cut point of $M$. (It would separate $s_3$ from $\bar G$.)  We claim that $(a',b')$ form an inseparable cut pair of $M$. We first note that they are a cut pair, since any path from $s_3$ to $\bar G$ must pass through either $a'$ or $b'$.  Suppose that $a'$ and $b'$ are separated by some other pair $(x,y)$.   Then by Papasoglu--Swenson \cite[Lem.~15]{PapasogluSwenson06} this cut pair is included in our necklace $S$.  Since $a'$ and $b'$ are in the closure of the gap, there are points of the gap that are separated by $x$ and $y$.  This is a contradiction to the definition of gap.  Note that $a'$ and $b'$ are the only element of $S$ in $\bar G$, as $G$ is contained in some $M_i$ for every cyclic subset. \end{proof}

In the case that $G$ is one-ended, we get the stronger conclusion of \ref{thm:nocutpairs}: 

\begin{proof}[Proof of Theorem~\ref{thm:nocutpairs}]  By Proposition~\ref{prop:inseparable}, we know that $M$ has no cut point and no inseparable cut pairs.   If $M$ had any cut pair, it would contain an inseparable cut pair by Proposition~\ref{prop:SeparableInseparable}, a contradiction.  
\end{proof}

\section{Peripheral Subgroups}
\label{sec:PeripheralSubgroups}

In this section, we prove the following theorem. 

\begin{thm}
\label{thm:VirtualSurfaceGroup}
Suppose the finitely generated group $G$ is one ended, the pair $(G, \mathcal{P})$ is relatively hyperbolic, and $G$ does not split relative to $\mathcal{P}$ over a parabolic subgroup.
If the boundary $M=\boundary(G,\mathcal{P})$ is planar then each member of $\mathcal{P}$ is virtually the fundamental group of a compact surface. 
\end{thm}

The proof of Theorem~\ref{thm:VirtualSurfaceGroup} uses the following result.

\begin{prop}
\label{prop:RigidNoCutPairs}
Let $G$ be finitely generated and one ended.
Suppose $(G,\mathcal{P})$ is relatively hyperbolic and $G$ does not split relative to $\mathcal{P}$ over a parabolic subgroup.
Let $G_Z$ be a nonelementary vertex group of the JSJ decomposition of $(G,\mathcal{P})$ over elementary subgroups.
Let $\mathcal{Q}_Z=\mathcal{P}_Z \cup \mathcal{E}_Z$ be the pinched peripheral structure of $G_Z$ as in Definition~\ref{def:PinchedPeripheral}.
If the pinched boundary $M_Z = \boundary(G_Z,\mathcal{Q}_Z)$ is not homeomorphic to the circle, then $M_Z$ is a Peano continuum with no cut points and no cut pairs.
\end{prop}

We note that $G_Z$ might not be one ended, so  Theorem~\ref{thm:nocutpairs} will not apply in all cases. 
As shown by Proposition~\ref{prop:ParabolicCutPairs}, the conclusion of Theorem~\ref{thm:nocutpairs} need not hold when the rigid group in question is not one ended.
Nevertheless, we show that the pinched boundary of $G_Z$ cannot have cut pairs since it arises as a vertex group of a splitting of $G$.

\begin{proof}
If $G_Z$ is a one-ended group, we apply Theorem \ref{thm:nocutpairs}.  In general, 
assume that the pinched boundary $M_Z$ is not a circle. Then $G_Z$ is a rigid vertex group by Lemma~\ref{lem:CircleBoundary}, so $G_Z$ has no elementary splittings relative to $\mathcal{Q}_Z$.
According to Lemma~\ref{lem:NoSplitting}, the pinched boundary is a Peano continuum without cut points such that all local cut points are parabolic.

By way of contradiction, suppose $M_Z$ contains a cut pair. 
Then by Proposition~\ref{prop:SeparableInseparable}, it contains an inseparable cut pair $\{a,b\}$ such that $a$ and $b$ are parabolic points. 
The strategy is to show that there is also a parabolic inseparable cut pair $\{a',b'\}$ in $M$. This conclusion would contradict Corollary~\ref{cor:NoParabolicExact} since $G$ is one ended and $M$ is a Peano continuum without cut points by Theorem~\ref{thm:Connected}.

Observe that the pinched boundary $M_Z$ can be obtained from the connected space $M$ by collapsing to a point each component of the closure of $M \setminus \boundary(G_Z,\mathcal{P}_{Z})$. One component will be collapsed for each edge adjacent to $Z$.  
The proof has two cases, depending on whether the map $M \rightarrow M_Z$ is injective on the preimage of $\lbrace a,b \rbrace$ or not. 

If each of $a$ and $b$ have exactly one preimage in $M$ then the preimage of $ \lbrace a, b \rbrace$ is a cut pair of $M$ consisting of two parabolic points $a'$ and $b'$.   We claim that this cut pair is inseparable in $M$. Note that no cut pair in $M$ has the property that one point is in the limit set of $G_Z$ and the other is not, by inseparability of the limit sets of the edge stabilizers.  Let $\lbrace c', d' \rbrace$ be any other cut pair of $M$.  We claim it does not link with $\lbrace a', b' \rbrace$. If the pair $\lbrace c',d' \rbrace$ is inseparable then it does not link with $\lbrace a', b' \rbrace$, so we assume $\lbrace c',d' \rbrace$ is separable.  If $\lbrace c',d' \rbrace$ in the same vertex stabilizer, it is not the limit set of any edge stabilizer, so the image $M \to M_Z$ is injective on the pair $\{c',d'\}$ and maps it to a cut pair in $M_Z$ which links with $\lbrace a,b \rbrace$, contradicting that $\{a,b\}$ is inseparable.
Therefore $\lbrace c',d' \rbrace$ is not in the limit set of $G_Z$ in $M$.  Since it is not contained in the limit set of $G_Z$, this pair $\{ c',d' \}$ must be separated from the limit set of $G_Z$ by an inseparable cut pair.
But then $\{a',b' \}$ does not link with $\{ c',d' \}$.  
Since $\{a',b'\}$ does not link with any other cut pair, it is inseparable, so it is also parabolic.  We are done in this case. 

Now assume that one of $a$ or $b$, say $a$, was obtained by collapsing the limit set of some edge group in $\mathcal{E}_Z$.  Then $a$ has valence $2$, and $M_Z \setminus \{a\}$ has two ends.  Since $M_Z$ has no cut point, each cut point of $M_Z \setminus \{a\}$ separates these two ends, and the two-point end compactification of  $M_Z \setminus\{a\}$ has a linear separation ordering on its cut points.  
Therefore since the $G_Z$--stabilizer of $a$ acts cocompactly on $M_Z \setminus \{a\}$, there exists a cut point of $M_Z \setminus \{a\}$ to the left of $b$ and one to the right of $b$.  These two points form a cut pair of $M_Z$ which separates $a$ from $b$. So $\{ a,b \}$ is not an inseparable cut pair, contradicting our hypothesis.
It follows that $M_Z$ has no cut pairs.
\end{proof}

\begin{lem}
\label{lem:MZ:Planar}
Let $G$ and $G_Z$ be as in Proposition~\ref{prop:RigidNoCutPairs}.
If the boundary $M$ is planar then so is the pinched boundary $M_Z$.
\end{lem} 

\begin{proof}
We follow a strategy similar to \Haissinsky's proof of \cite[Lem.~6.5]{Haissinsky_Invent}.  We produce an embedding $M_Z \to S^2$ by composing the given embedding $\boundary(G_Z,\mathcal{P}_Z) \to S^2$ with a quotient $S^2\to S^2$ obtained using Moore's Theorem:
If $\mathcal{A}$ is a null family of pairwise disjoint closed, connected, nonseparating sets of $S^2$, then the quotient $S^2/\mathcal{A}$ formed by collapsing each member of $\mathcal{A}$ to a point is homeomorphic to $S^2$ (see \cite[\S 61.IV]{Kuratowski_VolII}).
A family of subsets is \emph{null} if for each $\epsilon>0$ only finitely many members of the family have diameter greater than $\epsilon$.

For each cut pair associated to an edge emanating from $Z$, construct an embedded arc connecting the endpoints of the associated loxodromic in $K= \partial(G_Z, P_Z)$ in a path-connected complementary component of $K$. Call this collection of arcs $\mathcal{A}$. Note that $K$ separates $M$ and the complementary components are attached along cut pairs.  The set of these components is a null family by Lemma~\ref{lem:NullComponents}, proved below. The union of $K$ with this collection of arcs is planar, as the union embeds in $M$.  Now let $q\colon S^2 \rightarrow S^2$ be the quotient obtained by collapsing each arc in this collection to a point. The image of $K$ under this quotient is an embedded copy of $M_Z$ in $S^2$.
\end{proof}

\begin{lem}
\label{lem:NullComponents}
Let $M$ be a Peano continuum, and let $K$ be a compact subset such that for each component $U$ of $M\setminus K$ the frontier $\bar{U} \setminus U$ contains exactly two points.
Then the closures $\bar{U}$ of components of $M\setminus K$ are a null family of Peano subcontinua.
\end{lem}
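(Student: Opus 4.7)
The plan is to argue by contradiction. Suppose the components of $M \setminus K$ do not form a null family, so there exist $\epsilon > 0$ and an infinite sequence of pairwise distinct components $U_n$ with $\diam(U_n) \geq \epsilon$. Writing $\bar{U}_n \cap K = \{a_n, b_n\}$, each $\bar{U}_n = U_n \cup \{a_n, b_n\}$ is a continuum of diameter at least $\epsilon$. I would first pass to a subsequence so that $a_n \to a$ and $b_n \to b$ in the compactum $K$, and then pass to a further subsequence so that $\bar{U}_n$ converges in Hausdorff distance to a nonempty continuum $C$ of diameter at least $\epsilon$ (Hausdorff limits of connected sets in a compact metric space are connected).

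The next step will be to show $C \subseteq K$. If some $x \in C$ lay in a component $U_m$ of $M \setminus K$, take $x_n \in \bar{U}_n$ with $x_n \to x$. For $n$ large, $x_n$ lies in the open set $U_m \subseteq M \setminus K$; since $a_n, b_n \in K$, this forces $x_n \in U_n \cap U_m$, hence $U_n = U_m$ for all large $n$, contradicting distinctness.

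Now the central step. Since $\diam(C) > 0$ and $C$ is a continuum, $C$ contains a point $x$ distinct from both $a$ and $b$ (a two-point set is not connected). Choose $x_n \in \bar{U}_n$ with $x_n \to x$; because $a_n \to a \neq x$ and $b_n \to b \neq x$, we have $x_n \in U_n$ for all large $n$. This is where the two-point hypothesis enters decisively. Using local connectedness of $M$ at $x$, pick a connected open neighborhood $V$ of $x$ whose diameter is small enough that $a_n, b_n \notin V$ for all large $n$, and let $W_n$ be the component of the open set $V \setminus K$ containing $x_n$. Since $W_n$ is connected and meets $U_n$ inside the open set $M \setminus K$, we have $W_n \subseteq U_n$; hence any limit point of $W_n$ lying in $K$ would lie in $\bar{U}_n \cap K = \{a_n, b_n\}$, which is outside $V$. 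Combined with the fact that distinct components of the locally connected space $V \setminus K$ are open in $V \setminus K$ and hence separated by disjoint open sets, this forces $W_n$ to be closed in $V$. Being also open and nonempty in the connected set $V$, the component $W_n$ must equal $V$, contradicting $x \in V \cap K$.

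The main obstacle will be this final step: converting the hypothesis that $\bar{U}_n \cap K$ consists of exactly two points into the conclusion that $W_n$ has no boundary inside $V$. The Hausdorff convergence and the claim $C \subseteq K$ are fairly standard soft topology; the heart of the argument is this translation. Without the two-point assumption, components of $V \setminus K$ could acquire legitimate boundary points inside $V$, and the final contradiction would evaporate.
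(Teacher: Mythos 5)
Your proof is correct, but it takes a genuinely different route from the paper. The paper argues directly: it cites Hocking--Young (Thm.~3.9) to get that for any $\delta>0$ all but finitely many components of $M\setminus K$ lie in the $\delta$--neighborhood of $K$, then uses uniform local connectedness of the Peano continuum $M$ to connect each point of such a component to its two-point frontier $\bar{U}\cap K$ by a set of diameter at most $\epsilon$, and a short connectedness argument then bounds $\diam(U)\le 4\epsilon$ for all but finitely many $U$. You instead argue by contradiction via hyperspace compactness: extract a Hausdorff-convergent subsequence of the closures $\bar{U}_n$ (Blaschke selection), show the limit continuum $C$ of diameter $\ge\epsilon$ lies in $K$, pick $x\in C\setminus\{a,b\}$, and use local connectedness at $x$ to show the component of $V\setminus K$ through $x_n$ would be nonempty, open, and closed in the connected neighborhood $V$ --- the two-point hypothesis entering exactly where you say it does, namely to guarantee that this component has no frontier inside $V$. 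All the steps check out (in particular the clopen argument: $\bar{W}_n\cap V\subseteq V\setminus K$ because $\bar{W}_n\cap K\subseteq\bar{U}_n\cap K=\{a_n,b_n\}\not\subset V$, and openness of components of open sets then gives closedness in $V$). What each approach buys: yours is self-contained, avoiding the Hocking--Young citation and the uniform-local-connectedness bookkeeping, and would work verbatim with any fixed finite bound on $|\bar U\cap K|$; the paper's is shorter given the cited ingredient and is quantitative, yielding the explicit bound $\diam(U)\le 4\epsilon$ for all but finitely many components rather than a pure existence-of-contradiction statement.
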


\begin{proof}
Since each component $U$ is locally connected and has a discrete frontier, $\bar{U}$ is also locally connected.
If the family of components is not null, there exists $\epsilon>0$ and an infinite family of components $U$ of $M\setminus K$ each containing a point $p_U$ with $d(p_U,K)>\epsilon$.  If $U$ and $U'$ are two such components, any connected set containing $p_U$ and $p_{U'}$ has diameter at least $\epsilon$.
However, by compactness, the distance $d(p_U,p_{U'})$ may be chosen arbitrarily close to zero, contradicting the local connectedness of $M$.
\end{proof}

The following result characterizes the complementary components of certain planar Peano continua.

\begin{thm}
\label{thm:PlanarPeano}
Let $M\subset S^2$ be a nontrivial planar Peano continuum.  Then we have the following:
\begin{enumerate}
   \item
   \label{item:Nullity}
   The components of $S^2 \setminus M$ are a null family.
   \item
   \label{item:Torhorst}
   Suppose $M$ has no cut points. For each component $U$ of $S^2 \setminus M$, the boundary $\boundary U$ is a Jordan curve and the closure $\bar{U}$ is a closed disc.
   \item
   \label{item:NoCutPairs}
   Suppose $M$ has no cut points and no cut pairs.
   Let $U_1$ and $U_2$ be components of $S^1 \setminus M$. Then the Jordan curves $\boundary U_1$ and $\boundary U_2$ intersect in at most one point.
\end{enumerate}
\end{thm}

The first two assertions are classical results of planar topology.
Assertion~(\ref{item:Nullity}) is due to Sch\"{o}nflies (see \cite[IV.7.7]{Wilder_Topology}).
Assertion~(\ref{item:Torhorst}) is a result of Torhorst; see \cite[IV.6.12]{Wilder_Topology} for a topological proof and \cite[\S 17]{Milnor_Dynamics} for a complex analytic proof. We provide a short proof of Assertion~(\ref{item:NoCutPairs}).

\begin{proof}[Proof of Theorem~\ref{thm:PlanarPeano}(\ref{item:NoCutPairs})]
Suppose by way of contradiction that $\overline{U}_1 \cap \overline{U}_2$ contains distinct points $x \ne y$. 
For each $i=1,2$ let $c_i$ be a properly embedded arc in $\overline{U}_i$ joining $x$ and $y$; in other words, an embedding $I \to \overline{U}_i$ such that $\boundary I$ maps to $\{x,y\}$ and the preimage of $\boundary U_i$ equals $\boundary I$.
Then $c_1 \cup c_2$ is a Jordan curve $c$ that meets $M$ only in the points $x$ and $y$.   This Jordan curve divides the sphere into two components such that at least one component of $M \setminus \lbrace x, y \rbrace$ lies inside the circle and at least one lies outside.  Indeed $\partial U_i \subset M$ for each $i$ as $M$ is closed.  Thus $\{x,y\}$ is a cut pair of $M$, a contradiction.
\end{proof}

The following proposition was previously known to hold for the \Sierpinski\ carpet \cite{Whyburn_Sierpinski,KapovichKleiner00}.  We extend it to the setting of planar Peano continua.

\begin{prop} \label{prop:Planar} 
Let $M\subset S^2$ be a planar Peano continuum with no cut points and no cut pairs.
Then each homeomorphism of $M$ extends to a homeomorphism of $S^2$.
Furthermore any convergence group action of a group $G$ on $M$ extends to a convergence group action on $S^2$ with limit set contained in $M$.
\end{prop}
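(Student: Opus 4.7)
The plan is to extend across each complementary component of $S^2\setminus M$ via a Schoenflies argument, after first establishing that the family of peripheral Jordan curves $\{\partial U_\alpha\}$ is intrinsic to $M$.  By Theorem~\ref{thm:Torhorst} each closure $\bar U_\alpha$ is a closed disc with Jordan curve boundary $\partial U_\alpha\subset M$, and by Lemma~\ref{lem:NoCutPair} distinct peripheral boundaries meet in at most one point.

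The main structural claim I would prove is that a Jordan curve $J\subset M$ equals some $\partial U_\alpha$ if and only if $M\setminus J$ is connected.  The reverse direction is immediate from the planar embedding: if $M\setminus J$ is connected then it lies in a single complementary disc of $J$ in $S^2$, and the interior of the other disc is then a complementary component of $S^2\setminus M$ with boundary $J$.  For the forward direction I would use the no-cut-pair hypothesis: if $J=\partial U_\alpha$ and $M\setminus J$ had components $V_1,V_2$, one first shows that each $\bar V_i\cap J$ meets every arc $J\setminus\{a,b\}$ (otherwise $\{a,b\}$ would be a cut pair of $M$), and then a combinatorial analysis in $J\cong S^1$ using uniform local connectedness produces a pair of points of $J$ that separates a point of $V_1$ from a point of $V_2$ in $M$, a contradiction.

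Given this characterization, a homeomorphism $f\colon M\to M$ permutes $\{\partial U_\alpha\}$, so there is a bijection $\alpha\mapsto\beta(\alpha)$ with $f(\partial U_\alpha)=\partial U_{\beta(\alpha)}$.  The Schoenflies theorem extends each boundary map to a homeomorphism $\bar U_\alpha\to\bar U_{\beta(\alpha)}$, and assembling these with $f$ on $M$ yields a bijection $\hat f\colon S^2\to S^2$.  Continuity follows from the classical fact that the complementary domains of a Peano continuum in $S^2$ form a null family: any sequence $x_n\to x\in M$ either passes through $M$ (where $f$ is continuous) or through discs $\bar U_{\alpha_n}$ of shrinking diameter, and continuity of $f$ on any compact subset of $M$ containing the relevant boundary circles forces the images $\bar U_{\beta(\alpha_n)}$ to shrink as well.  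Applying the same argument to $f^{-1}$ promotes $\hat f$ to a homeomorphism of $S^2$.

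To upgrade to an equivariant extension of a convergence action $G\curvearrowright M$, I would, for each $G$-orbit of complementary components, pick a representative $U_\alpha$ and extend the action of $\Stab_G(U_\alpha)$ on $\partial U_\alpha\cong S^1$ to $\bar U_\alpha$; the restricted action is a convergence action on $S^1$, hence topologically conjugate to a Fuchsian group by the Tukia--Gabai--Casson-Jungreis classification invoked in the proof of Lemma~\ref{lem:CircleBoundary}, so it extends canonically through the Fuchsian disc model.  Transporting over the orbit via $G$ produces a well-defined action of $G$ on $S^2$ by homeomorphisms.  The convergence property on $S^2$ then follows from the convergence property on $M$ combined with the null-family structure, and the limit set lies in $M$ since all accumulation points arise as limits of points in $M$.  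I expect the two main obstacles to be the forward direction of the intrinsic characterization (requiring careful use of the no-cut-pair hypothesis via $S^1$-combinatorics on $J$) and the equivariant disc extensions for infinite stabilizers (handled by the Tukia-type classification through the Fuchsian disc model).
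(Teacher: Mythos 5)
Your overall architecture (Torhorst plus Lemma~\ref{lem:NoCutPair}, an intrinsic characterization of the peripheral circles, Schoenflies/disc extensions glued using the null family, and Fuchsian models via Tukia--Gabai--Casson--Jungreis for the equivariant statement) is the same as the paper's, and those outer layers are fine. The gap is in the one step where the real work lies: showing that a peripheral circle $J=\boundary U$ does \emph{not} separate $M$ (the ``forward direction'' of your characterization). Your sketch has two parts, and neither holds up as stated. First, the parenthetical justification of the sub-claim --- ``otherwise $\{a,b\}$ would be a cut pair'' --- is not a proof: if $\bar V_1\cap J$ misses the open arc $I$ with endpoints $a,b$, you still must show that \emph{every} path in $M$ from $V_1$ to $I$ passes through $\{a,b\}$; but such a path may leave $J\cup\bar V_1$ and travel through other components of $M\setminus J$ whose closures meet both arcs of $J\setminus\{a,b\}$, and then $M\setminus\{a,b\}$ need not be disconnected. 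Controlling how all the other pieces attach is exactly the global difficulty here, and your sketch does not address it.

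Second, and more seriously, your two steps are mutually inconsistent. If the sub-claim holds for both components, then $\bar V_1\cap J$ and $\bar V_2\cap J$ are dense, hence equal to $J$. Now suppose some pair $\{a,b\}\subset J$ separated a point of $V_1$ from a point of $V_2$, say $M\setminus\{a,b\}=W_1\sqcup W_2$ with $V_i\subseteq W_i$ (each $V_i$ is connected and disjoint from $J$). Each open arc of $J\setminus\{a,b\}$ is connected, so lies in a single $W_i$; but every point of that arc is a limit of points of $V_1$ and of $V_2$, and the $W_i$ are open in $M$, so the arc would have to lie in both $W_1$ and $W_2$, a contradiction. So once step one is in force, \emph{no} pair of points of $J$ can separate a point of $V_1$ from a point of $V_2$, and the ``combinatorial analysis in $J\cong S^1$'' you invoke cannot produce one. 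The paper avoids this trap entirely: it proves directly that $M\setminus\boundary U$ is path connected, by puncturing each other complementary disc at the single point $q_{U'}$ where it may touch $\bar U$ (this is where the no-cut-pair hypothesis enters, via Lemma~\ref{lem:NoCutPair}), showing $\hat U\setminus F$ is path connected by a Baire category argument, and then retracting $\hat U\setminus F$ onto $M\cap\hat U$, with continuity of the retraction coming from the null family of complementary discs. You would need to either adopt an argument of this kind or supply a genuinely new combinatorial proof; the dichotomy as you have set it up does not close.
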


\begin{proof}
According to Theorem~\ref{thm:PlanarPeano}, the closure of each complementary region $U$ of $S^2\setminus M$ is homeomorphic to a closed disc whose boundary $\boundary U$ is a Jordan curve and each pair of these discs intersects in at most one point.
Since $M \subset S^2$, any embedded circle in $M$ that does not bound a complementary component of $M$ in $S^2$  must separate $M$.
We will show conversely that any circle $\boundary U$ that bounds a complementary region $U$ does not separate $M$.

Fix a complementary region $U$.
We will see that $M\setminus \boundary U$ is path connected.
The Jordan curve $\boundary U$ separates $S^2$ into $U$ and a disc $\hat{U}$ containing $M \setminus \boundary U$.
Choose a subset $F\subset S^2$ containing one point $q_{U'}$ for each component $U'$ of $S^2 \setminus M$ as follows.
Let $q_{U'}$ be the unique point in $\overline{U} \cap \overline{U}'$ if such a point exists, and otherwise let $q_{U'}$ be any point of $U'$.
Points of the first type lie in $\boundary \hat{U}$, while points of the second type lie in $\hat{U}$.
Since $F$ is countable, $\hat{U} \setminus F$ is path connected.

We now show that $\hat{U} \setminus F$ retracts onto $M \setminus \boundary U = M \cap \hat{U}$.
Note that $\hat{U} \setminus F$ is obtained from $M \cap \hat{U}$ by adding a countable number of punctured discs $\overline{U}'\setminus \{q_{U'}\}$ of two types.
If $\overline{U}'$ is a complementary component intersecting $\overline{U}$ in the point $q_{U'}$ then $q_{U'}$ lies on the boundary of $\overline{U}'$ and $\overline{U}'\setminus \{q_{U'}\}$ is homeomorphic to $\R \times [0,\infty)$.
But if $\overline{U}'$ is disjoint from $\overline{U}$ then $q_{U'}$ is in the interior of $U'$
and $\overline{U}'\setminus \{q_{U'}\}$ is homeomorphic to $S^1 \times [0,\infty)$.
In either case the punctured disc or boundary punctured disc $\overline{U}'\setminus \{q_{U'}\}$ retracts onto $\boundary U' \setminus \{q_{U'}\}$, a subset of $M \cap \hat{U}$.
The retraction $r \colon \hat{U} \setminus F \to M \cap \hat{U}$
is defined piecewise; on $M \cap \hat{U}$ it equals the identity function and on each punctured disc $\overline{U}'\setminus \{q_{U'}\}$ it is a retraction onto the boundary of the punctured disc.
By Theorem~\ref{thm:PlanarPeano}(\ref{item:Nullity}), this retraction is continuous.
Since $\hat{U} \setminus F$ retracts onto $M \setminus \boundary U$, the latter space is path connected.
In particular, we have shown that $\boundary U$ does not separate $M$.

Any homeomorphism $h$ of $M$ leaves its family of nonseparating circles invariant.  Therefore, it permutes the boundary circles of the complementary regions in $S^2$.
Any homeomorphism of the circle extends to a homeomorphism of the disc, establishing the first claim.
Extending the action of $G$ on $M$ to a convergence group action on $S^2$ requires a bit more care.
Choose a representative $U$ from each orbit of nonseparating circles.  For each such $U$, the stabilizer $H_U$ is a convergence group acting on $\partial U= S^1$. Then by \cite{Tukia_Fuchsian, Gabai92, CassonJungreis94} there exists a Fuchsian action of $H_U$ on the hyperbolic plane whose boundary action is topologically conjugate to the given action on $\boundary U$.
We extend the action of $H_U$ on $\boundary U$ to $\bar U$ by identifying $\bar U$ with $\bar\Hyp^2$. 
Note that the action of $H_U$ on $\bar U$ is a convergence group whose limit set is contained in $\boundary U$.
The action of $G$ on $M$ then extends equivariantly to all discs in the complement of $M$. 
By Theorem~\ref{thm:PlanarPeano}(\ref{item:Nullity}), each extension is continuous. The action on $S^2$ satisfies the convergence property of Definition~\ref{def:convergence}, since any collapsing sequence for the action on $M$ is also a collapsing sequence for the action on $S^2$.
By construction, the limit set of this action is contained in $M$.
\end{proof}

The following is an immediate consequence of Theorem~\ref{thm:nocutpairs} and Proposition~\ref{prop:Planar}. 

\begin{thm}
\label{thm:RigidExtends}
Suppose $G$ is one ended and $(G, \mathcal{P})$ is relatively hyperbolic. If $(G, \mathcal{P})$ is rigid 
and $M=\boundary(G,\mathcal{P})$ topologically embeds in $S^2$, the action of $G$ on $M$ extends to a convergence group action on $S^2$ with limit set $M$.  \qed
\end{thm}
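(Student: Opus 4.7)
The plan is to reduce the theorem to Proposition~\ref{prop:Planar}, which then concludes the proof at once as soon as its hypotheses are verified for $M$. That proposition requires $M$ to be a Peano continuum without cut points or cut pairs and embedded in $S^2$; the case $M \cong S^1$ is not covered and must be treated separately.

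Since $(G,\mathcal{P})$ is rigid, it admits no elementary splittings relative to $\mathcal{P}$, and in particular no splittings over finite or parabolic subgroups. Combined with the one-endedness of $G$ and Theorem~\ref{thm:Connected}, this forces $M$ to be a Peano continuum with no global cut points (and forces $G$ to be finitely generated, since a non-finitely-generated $G$ would produce a global cut point and hence a peripheral, thus elementary, splitting). If in addition $M$ is not homeomorphic to $S^1$, then Theorem~\ref{thm:nocutpairs} applies and shows that $M$ has no cut pairs either. Together with the assumption that $M$ embeds in $S^2$, this verifies the hypotheses of Proposition~\ref{prop:Planar}, which extends the convergence action of $G$ on $M$ to an action of $G$ on $S^2$ by homeomorphisms.

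It remains to handle the case $M \cong S^1$. The geometrically finite convergence action of $G$ on $S^1$ has a finite kernel $F$, and the induced faithful action of $G/F$ is topologically conjugate to a Fuchsian action by the Tukia--Gabai--Casson--Jungreis theorem, as used in the proof of Lemma~\ref{lem:CircleBoundary}. Any Fuchsian action of $G/F$ extends to a M\"obius action on the Riemann sphere $S^2$ restricting to the original action on $S^1$; precomposing with $G \to G/F$ promotes this to a $G$--action on $S^2$ by homeomorphisms extending the given action on $M$. The substantive content of the argument lies in Theorem~\ref{thm:nocutpairs} and Proposition~\ref{prop:Planar}; I do not foresee any serious obstacle in the present proof beyond correctly recording the circle case.
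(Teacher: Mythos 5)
Your proof is correct and takes essentially the same route as the paper, which deduces Theorem~\ref{thm:RigidExtends} immediately from Proposition~\ref{prop:Planar}, with Theorem~\ref{thm:nocutpairs} (together with Theorem~\ref{thm:Connected} and rigidity) supplying the Peano-continuum, no-cut-point, and no-cut-pair hypotheses. Your explicit handling of the $M \cong S^1$ case is a detail the paper leaves implicit, and your argument for it (Tukia--Gabai--Casson--Jungreis plus the M\"obius extension, or simply coning over the complementary discs) is fine.
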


The following result generalizes Theorem~\ref{thm:RigidExtends}, and follows immediately from Proposition~\ref{prop:RigidNoCutPairs}, Lemma~\ref{lem:MZ:Planar}, and Proposition~\ref{prop:Planar}.

\begin{thm}
\label{thm:Planargeneral}
Suppose $G$ is one ended, and $(G, \mathcal{P})$ is relatively hyperbolic with boundary $M=\boundary(G,\mathcal{P})$ a planar Peano continuum with no cut points.
Let $G_Z$ be a rigid piece of the JSJ decomposition over $2$--ended subgroups.  
Every homeomorphism of $M_Z = \partial(G_Z,\mathcal{Q}_Z)$ extends to a homeomorphism of $S^2$.
Furthermore the extension can be chosen so that the minimal convergence action of $G_Z$ on $M_Z$ extends to a convergence group action on $S^2$ with limit set equal to $M_Z$. \qed
\end{thm} 

\begin{proof}[Proof of Theorem~\ref{thm:VirtualSurfaceGroup}]
We claim that any peripheral subgroup is contained in a vertex stabilizer of the JSJ decomposition.  Indeed, the JSJ decomposition is relative to $\mathcal{P}$ (see Section~\ref{sec:prelim}) and any homeomorphism of $\partial(G, \mathcal{P})$ preserves this decomposition, see Theorem~\ref{thm:JSJ}.   Since the boundary does not contain a cut point, any peripheral subgroup is either a cusp group of a virtually Fuchsian subgroup (so two ended) or contained in a rigid vertex group that is not separated by any exact inseparable cut pair.   
As two-ended groups are virtual surface groups, it suffices to consider the rigid case.

By Theorem~\ref{thm:Planargeneral}, the parabolic action of the group $P$ on $M_Z$ extends to a parabolic action on $S^2$. Therefore $P$ also acts properly on the plane $S^2 \setminus \{a\}$.
Every peripheral subgroup $P$ of a finitely generated relatively hyperbolic group is itself finitely generated by Osin \cite[Prop.~2.29]{Osin06}.
Therefore $P$ is virtually a surface group, by Corollary~\ref{cor:VirtuallyTorsionFree}.  
\end{proof}


\bibliographystyle{alpha}
\bibliography{chruska.bib}

\end{document}